\renewcommand*\env@matrix[1][*\c@MaxMatrixCols c]{%
  \hskip -\arraycolsep
  \let\@ifnextchar\new@ifnextchar
  \array{#1}}
\newtheorem{definition}{Definition}
\newtheorem{assumption}{Assumption}
\newtheorem{theorem}{Theorem}
\newtheorem{lemma}{Lemma}
\newtheorem{corollary}{Corollary}
\newcommand{\subscript}[2]{$#1 _ #2$}
\begin{document}
\title{On the Q-linear convergence of Distributed Generalized ADMM under non-strongly convex function components}
%
%
%
\author{Marie~Maros,~\IEEEmembership{Student Member,~IEEE,}
        and~Joakim~Jald\'{e}n,~\IEEEmembership{Senior Member,~IEEE}
\thanks{The authors are with the department of Information Science and Engineering, KTH Royal Institute of Technology, Stockholm 100 44, Sweden (e-mail: mmaros@kth.se; jalden@kth.se).}
\thanks{This project has received funding from the European Research Council (ERC) under the European Union's Horizon 2020 research and innovation programme (grant agreement No 742648)}}

\markboth{TSIPN}%
{Shell \MakeLowercase{\textit{et al.}}: Bare Demo of IEEEtran.cls for IEEE Journals}
\maketitle
\begin{abstract}
Solving optimization problems in multi-agent networks where each agent only has partial knowledge of the problem has become an increasingly important problem. In this paper we consider the problem of minimizing the sum of $n$ convex functions. We assume that each function is only known by one agent. We show that Generalized Distributed ADMM converges Q-linearly to the solution of the mentioned optimization problem if the over all objective function is strongly convex but the functions known by each agent are allowed to be only convex. Establishing Q-linear convergence allows for tracking statements that can not be made if only R-linear convergence is guaranteed. Further, we establish the equivalence between Generalized Distributed ADMM and P-EXTRA for a sub-set of mixing matrices. This equivalence yields insights in the convergence of P-EXTRA when overshooting to accelerate convergence.
\end{abstract}
\begin{IEEEkeywords}
Decentralized Optimization, Convex Optimization, ADMM, Q-linear convergence, EXTRA
\end{IEEEkeywords}

\section{Introduction}
Consider a network of $n$ agents in which the agents have as a goal to cooperatively solve the optimization problem
\begin{equation}
\label{eq:original}
\underset{\bar{\mathbf{x}} \in \mathbb{R}^p}{\text{min}} \qquad \bar{f}(\bar{\mathbf{x}}) = \sum_{i=1}^n f_i(\bar{\mathbf{x}}).
\end{equation}
The variable $\bar{\mathbf{x}}$ is common to all agents and each function component $f_i: \mathbb{R}^p \to \mathbb{R}$ is a convex function that is known only to agent $i.$ Decentralized or distributed methods provide procedures according to which the agents cooperatively solve \eqref{eq:original} without explicitly exchanging their individual function components. Distributed methods typically rely on a reformulation of \eqref{eq:original} via the introduction of a local copy $\mathbf{x}_i$ of $\bar{\mathbf{x}}$ for each agent $i.$ The agents iteratively update their own copies using both local information and information from their immediate neighbors. The optimization problem in \eqref{eq:original} arises in many applications such as decentralized machine learning, state estimation in smart grids and wireless communications, see \cite{admm}, \cite{application} and references therein.

The alternating direction method of multipliers (ADMM) has attracted a lot of attention in signal processing thanks to its ability to deal with large scale optimization problems \cite{application,sp_admm}, non-smooth decentralized problems \cite{com_admm}, and decentralized problems \cite{large_admm} in the form of \eqref{eq:original}. In the context of \eqref{eq:original} ADMM yields a distributed algorithm that converges at a $\mathcal{O}(\frac{1}{k})$ rate under very mild assumptions on $f_i$ \cite{admm}. Moreover, it is known to converge regardless of step-size \cite{admm}. 
Under the stronger assumptions that the function components $f_i$ $i=1,\hdots,n$ are strongly convex and have Lipschitz continuous gradients, distributed-ADMM (D-ADMM) is known to converge Q-linearly, also regardless of step-size.
Q-linear convergence allows for tracking statements such as those given in \cite{track}.  Stronger assumptions on the problem's variation in time are required if only R-linear, local linear convergence or weaker convergence statements are provided \cite{maros}.  A proof of Q-linear convergence when only the sum in \eqref{eq:original}, $\bar{f},$ is strongly convex does not exist in the literature. Examples in which $\bar{f}$ is strongly convex but the function components $f_i$ are not can be found in \cite{extra}. A simple example in which this is the case is the distributed least squares problem
\begin{equation}
\label{eq:least_squares_distributed}
\underset{\bar{\mathbf{x}} \in \mathbb{R}^{p}}{\text{min}} \quad \frac{1}{2}\sum_{i=1}^n \|\mathbf{h}_i^T\bar{\mathbf{x}} - y_i \|,
\end{equation} 
where each node $i$ knows its measurement vector $\mathbf{h}_i \in \mathbb{R}^{p}$ and has independently obtained the measurement $y_i \in \mathbb{R}^1.$ If the problem \eqref{eq:least_squares_distributed} were to be solved in a centralized manner we would equivalently rewrite the problem as
\begin{equation}
\label{eq:least_squares}
\underset{\bar{\mathbf{x}} \in \mathbb{R}^n}{\text{min}} \quad \frac{1}{2}\|\mathbf{H}\bar{\mathbf{x}} - \mathbf{y}\|^2,
\end{equation}
where $\mathbf{H} \triangleq [\mathbf{h}_1^T;\hdots \mathbf{h}_n^T]$ and $\mathbf{y} \triangleq [y_1,\hdots,y_n].$ Note that if $\mathbf{H}$ is full ranked the optimization problem \eqref{eq:least_squares} is strongly convex. However, for $p > 1$ each of the functions $\frac{1}{2}\|\mathbf{h}_i^T\bar{\mathbf{x}} - y_i\|^2$ is not strongly convex.

Optimal algorithms to solve \eqref{eq:original} in a distributed manner have been recently proposed in \cite{optimal}. Their optimal convergence rates for the case in which $\bar{f}$ is non-strongly convex are further analyzed in \cite{copycats}. The obtained convergence rates correspond to those of the centralized methods with a penalty that depends on the network structure. However, neither optimal algorithms nor the optimal convergence rate in the case where $\bar{f}$ is strongly convex but where the function components $f_i$ are not strongly convex convex are yet known \cite{optimal}. 

Algorithms  for which the linear convergence of the iterates is established given that only $\bar{f}$ is strongly convex do exist \cite{extra}, \cite{interpret}, \cite{pextra}. These include the exact first-order algorithm (EXTRA) \cite{extra} and the distributed inexact gradient tracking method (DIGing) \cite{interpret} and proximal EXTRA (P-EXTRA) \cite{pextra}. EXTRA and DIGing require step-size selection in order to converge. P-EXTRA does not require a specific step-size to converge, but has been empirically observed to converge slower than ADMM \cite{pextra}.

In this paper we establish the global Q-linear convergence of generalized D-ADMM under the assumption that $\bar{f}$ is strongly convex and the functions $f_i$ are convex and $L_i-$smooth. By generalized distributed ADMM we refer to the decentralized algorithm obtained by applying generalized ADMM to an equivalent formulation of \eqref{eq:original}, where generalized ADMM \cite{variational} adds a quadratic perturbation in both primal iterates and an over-relaxation parameter in the dual iterate.
As pointed out in \cite{optimal} Q-linear convergence rates can be achieved in this case by regularizing the objective function. This said, what we show here is that when using generalized distributed ADMM no additional regularizations are necessary to guarantee Q-linear convergence. 
In order to establish this result, we equivalently re-formulate the problem in \eqref{eq:original} in a form that is suitable for the use of the method of multipliers. We then replace the primal iterate of the method of multipliers by an inexact iterate. We do this by upper bounding a term that does not directly allow for a distributable implementation. We then show that the obtained method is equivalent to generalized D-ADMM and use this interpretation to establish the Q-linear convergence of generalized distributed ADMM.

Interpreting the generalized D-ADMM as an inexact version of the method of multipliers also allows us to relate the generalized D-ADMM to other saddle point methods. In particular, we establish that under an appropriate choice of parameters generalized D-ADMM and P-EXTRA yield the same iterates. Hence, we establish that generalized ADMM and P-EXTRA correspond to inexact versions of the method of multipliers that use different upper bounds on a non-distributable term to make the algorithm distributed. This particular analysis allows us to relate overshooting in P-EXTRA to over-relaxation in ADMM and hence provides extended convergence guarantees for P-EXTRA even when overshooting is present. 

In order to simplify the iterates performed by the nodes several decentralized variants of ADMM have been recently proposed \cite{linearize}, \cite{pgadmm}. We believe the equivalence results between generalized distributed ADMM and P-EXTRA extend to PG-ADMM (Proximal Gradient ADMM) \cite{pgadmm} and linearized ADMM, \cite{linearize} and proximal gradient exact first-order algorithm (PG-EXTRA) \cite{pextra} and EXTRA \cite{extra} respectively. If this assertion is true our work establishes a framework to analyze the convergence of these algorithms.

This paper is structured as follows. First, we formulate the constrained problem required for the application of ADMM \cite{dadmm} in Section \ref{section:problem}. We introduce the required notation and conditions to clearly express ADMM as a decentralized method. In Section \ref{section:algorithm} we formulate another problem equivalent to \eqref{eq:original} and apply the Method of Multipliers to solve it. We then show that by generating the appropriate inexact iterates we obtain the generalized distributed ADMM. Section \ref{section:convergence} is devoted to formally proving the convergence properties of generalized distributed ADMM under the strong convexity of $\bar{f}$ and smoothness of the component functions $f_i.$ We discuss the connections between P-EXTRA and the generalized distributed ADMM in Section \ref{section:p-extra}. Section \ref{section:general} provides conditions under which the results from the previous sections can be extended.
\section{Problem Formulation and ADMM \label{section:problem}}
Consider a connected network $\mathcal{G} \triangleq \{\mathcal{V},\mathcal{A}\}$ with vertices $\mathcal{V}$ and arcs $\mathcal{A},$ where each agent corresponds to a vertex. The network consists of $n = |\mathcal{V}|$ vertices and $m = |\mathcal{A}|$ arcs.
 Each agent $i$ is capable of directly communicating with agent $j$ if they are connected via an arc $a(i,j)$, where $a(i,j) \in \{1,\hdots,m\}$ assigns an arc number or label to the arc originating in $i$ and terminating in $j.$
The communication is assumed bidirectional and therefore $(i,j) \in \mathcal{A}$ if $(j,i) \in \mathcal{A}.$
Two adjacent nodes capable of directly communicating will be referred to as neighbors.
Consequently, the neighborhood of node $i$ is defined as $\mathcal{N}_i \triangleq \{j : (i,j) \in \mathcal{A}\}.$

 Let $\mathbf{A}_{\text{s}} \in \mathbb{R}^{mp \times np}$ be the block arc source matrix containing $mn$ square blocks $(\mathbf{A}_{\text{s}})_{a,i} \in \mathbb{R}^{p \times p}.$
  The block $(\mathbf{A}_{\text{s}})_{a,i}$ is set to $\mathbf{0}_{p}$ if arc $a$ does not originate from node $i$ while it is set to $\mathbf{I}_p$ (the $p \times p$ identity matrix) otherwise.
   Analogously, let $\mathbf{A}_{\text{d}}$ denote the block arc destination matrix.
    The extended oriented incidence matrix is defined as $\mathbf{E}_{\text{o}} = \mathbf{A}_{\text{s}} - \mathbf{A}_{\text{d}}$ and the extended unoriented incidence matrix as $\mathbf{E}_{\text{u}} = \mathbf{A}_{\text{s}} + \mathbf{A}_{\text{d}}.$
     Further, let $\mathbf{D} \triangleq \frac{1}{2}(\mathbf{E}_{\text{o}}^T\mathbf{E}_{\text{o}} + \mathbf{E}_{\text{u}}^T\mathbf{E}_{\text{u}})$ denote the extended degree matrix. $\mathbf{D}$ is a diagonal matrix that can be expressed as $\mathbf{D} = \mathbf{D}_{\mathcal{G}} \otimes \mathbf{I}_p,$ where $\mathbf{D}_{\mathcal{G}} \in \mathbb{R}^{n \times n}$ is the degree matrix of $\mathcal{G}.$
      In other words, $\mathbf{D}_{\mathcal{G}}$ is a diagonal matrix in which each diagonal element $d_i$ corresponds to the number of edges connected to agent $i.$ 
      
      Problem \eqref{eq:original} will be reformulated in order to be able to apply ADMM following the steps in \cite{dadmm}.
       First of all, introduce, for each agent, a local copy $\mathbf{x}_i$ of  $\mathbf{\bar{x}}.$
       Then, create the auxiliary variables $\mathbf{z}_{a(i,j)} \in \mathbb{R}^p$ associated with each arc $(i,j) \in \mathcal{A}.$
        Finally the reformulation of \eqref{eq:original} can be written as as
        \begin{subequations}
        \label{eq:def_distributed}
       \begin{align}
        \underset{\{\mathbf{x}_i\} \in \mathbb{R}^p,\{\mathbf{z}_{a(i,j)}\} \in \mathbb{R}^p}{\text{min}}  \quad \sum_{i=1}^n f_i(\mathbf{x}_i) \\
        \text{s.t.}  \quad \mathbf{x}_i = \mathbf{z}_{a(i,j)},\,\mathbf{x}_j = \mathbf{z}_{a(i,j)},\, \forall (i,j) \in \mathcal{A}, \label{eq:def_distributed:constraints}
       \end{align}
       \end{subequations}
 where the constraints $\mathbf{x}_i = \mathbf{z}_{a(i,j)}$ and $\mathbf{x}_j = \mathbf{z}_{a(i,j)}$ enforce consensus among neighboring agents $i$ and $j.$
 A small yet illustrative example is provided in Fig. \ref{fig:graph_example}, together with the corresponding block source and destination matrices. 
\begin{figure}       
\centering
     \begin{tikzpicture}
     \begin{scope}[every node/.style = {circle,very thick, draw = blue}]
     \node  (A) at (0,0) {$\mathbf{x}_1$};
     \node  (B) at (3,0) {$\mathbf{x}_2$};
     \node  (C) at (6,0) {$\mathbf{x}_3$};
     \end{scope}
     \begin{scope}[
              every node/.style={fill=white,circle},
              every edge/.style={draw=blue,very thick}]
     \path [->] (A) edge [bend left = 30] node {$\mathbf{z}_{a(1,2)}$} (B);
     \path [->] (B) edge [bend left = 30] node {$\mathbf{z}_{a(2,1)}$} (A);
     \path [->] (B) edge [bend left = 30] node {$\mathbf{z}_{a(2,3)}$} (C);
     \path [->] (C) edge [bend left = 30] node {$\mathbf{z}_{a(3,2)}$} (B);
     \end{scope}
     \end{tikzpicture}
     
     \begin{tabular}{c|c|c}
     $a(i,j)$&$\mathbf{A_{\text{s}}}$ & $\mathbf{A_{\text{d}}}$ \\ \hline
     $\begin{matrix}
     a(1,2) = 1 \\
     a(2,1) = 2 \\
     a(2,3) = 3 \\
     a(3,2) = 4
     \end{matrix}$
     & $\begin{pmatrix}
     1 & 0 & 0 \\
     0 & 1 & 0 \\
     0 & 1 & 0 \\
     0 & 0 & 1
     \end{pmatrix} \otimes \mathbf{I}_p$  
     & $ \begin{pmatrix}
     0 & 1 & 0 \\
     1 & 0 & 0 \\
     0 & 0 & 1 \\
     0 & 1 & 0
     \end{pmatrix} \otimes \mathbf{I}_p$
     \end{tabular}
     \caption{Above: An example network with three agents. The auxiliary variables can be interpreted as variables lying on the edges. The equality constraints enforce that all variables at either end or on the edge yield the same value. Below: correspondence to arc labels, block source and block destination matrices for $\mathbf{x}_i \in \mathbb{R}^p.$ \label{fig:graph_example}}
\end{figure}
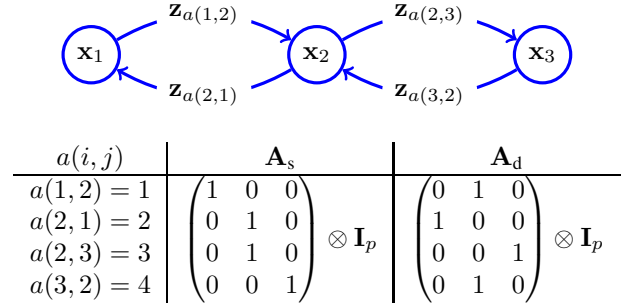
   As long as the network is connected, the constraints in \eqref{eq:def_distributed:constraints} are sufficient to guarantee that \eqref{eq:original} will have the same optimal solution(s) as \eqref{eq:def_distributed} and hence if $\mathbf{\bar{x}}^{\star}$ is an optimal solution to \eqref{eq:original}, $\mathbf{x}_i^{\star} = \mathbf{\bar{x}}^{\star},\forall i \in \mathcal{V}$ is an optimal solution to \eqref{eq:def_distributed}.  

Let $\mathbf{x} \triangleq [\mathbf{x}_1;\hdots;\mathbf{x}_n] \in \mathbb{R}^{np}$ and  $\mathbf{z} \triangleq [\mathbf{z}_{1};,\hdots;\mathbf{z}_m] \in \mathbb{R}^{mp}.$ Further, let $f:\mathbb{R}^{np} \to \mathbb{R}$ be defined as $f(\mathbf{x}) \triangleq \sum_{i=1}^n f_i(\mathbf{x}_i),$ $\mathbf{A} \triangleq [\mathbf{A}_{\text{s}}; \mathbf{A}_{\text{d}}]$ and $\mathbf{B} \triangleq [-\mathbf{I}_{mp};-\mathbf{I}_{mp}].$ Then the optimization problem in \eqref{eq:def_distributed} can then equivalently be written as
\begin{equation}
\underset{\mathbf{x} \in \mathbb{R}^{np},\mathbf{z}\in \mathbb{R}^{mp}}{\text{min}} \,\, f(\mathbf{x}) \qquad \text{s.t.} \, \mathbf{Ax} + \mathbf{Bz} = \mathbf{0}, \label{eq:distributed}
\end{equation}
for which $\mathbf{x}^{\star} = [\mathbf{x}^{\star}_1;\hdots;\mathbf{x}^{\star}_n]$ is optimal.
Note that alternative ways of formulating a distributed version of \eqref{eq:original} that lead to the same form as \eqref{eq:distributed} are possible. These are discussed in detail in \cite{ozdalgar}. The results found in this paper can be extended to other formulations of \eqref{eq:distributed}. This is discussed in Section \ref{section:general}.
Vectors with the same structure as $\mathbf{x}^{\star}$ will be referred to as \emph{consensual}. For clarity we formalize the definition.
\begin{definition}[Consensual]
A vector $\mathbf{v} \in \mathbb{R}^{np}$ partitioned in $p-$length sub-vectors $\mathbf{v} = [\mathbf{v}_1;\hdots;\mathbf{v}_n]$ is consensual if all sub-vectors are identical, i.e. $\mathbf{v}_1 = \hdots = \mathbf{v}_n.$
\end{definition}

Let $\boldsymbol{\alpha}_{a(i,j)}$ denote the dual multipliers associated to the constraint $\mathbf{x}_i = \mathbf{z}_{a(i,j)}$ in \eqref{eq:def_distributed:constraints} and similarly let $\boldsymbol{\beta}_{a(i,j)}$ denote the dual multipliers associated to the constraint $\mathbf{x}_j = \mathbf{z}_{a(i,j)}.$ Group the multipliers $\boldsymbol{\alpha}_{a(i,j)}$ in a vector $\boldsymbol{\alpha}$ and analogously group $\boldsymbol{\beta}_{a(i,j)}$ into $\boldsymbol{\beta}.$ Then, the multiplier $\boldsymbol{\lambda}$ associated to the equality constraint in \eqref{eq:distributed} can be written as $\boldsymbol{\lambda} = [\boldsymbol{\alpha};\boldsymbol{\beta}].$ Let the augmented Lagrangian be defined as
\begin{equation}
\mathcal{L}(\mathbf{x},\mathbf{y},\boldsymbol{\lambda}) \triangleq f(\mathbf{x}) + \boldsymbol{\lambda}^T(\mathbf{Ax} + \mathbf{Bz}) + \frac{\rho}{2} \|\mathbf{Ax} + \mathbf{Bz}\|^2,
\end{equation}
where $\rho > 0.$ Generalized ADMM proceeds by iteratively solving
\begin{subequations}
\label{eq:pure_ADMM}
\begin{align}
& \mathbf{x}^{k+1}:=\text{arg }\underset{\mathbf{x} \in \mathbb{R}^{np}}{\text{min}} \quad \mathcal{L}(\mathbf{x},\mathbf{z}^k,\boldsymbol{\lambda}^k) + \frac{1}{2}\|\mathbf{x}-\mathbf{x}^k\|^2_{\mathbf{P}} \label{eq:pure_ADMM:x}\\
& \mathbf{z}^{k+1}:=\text{arg }\underset{\mathbf{z} \in \mathbb{R}^{mp}}{\text{min}} \quad \mathcal{L}(\mathbf{x}^{k+1}, \mathbf{z},\boldsymbol{\lambda}^k) + \frac{1}{2}\|\mathbf{z}-\mathbf{z}^k\|^2_{\mathbf{Q}}\label{eq:pure_ADMM:z}\\
& \boldsymbol{\lambda}^{k+1} := \boldsymbol{\lambda}^k + \eta\rho (\mathbf{A}\mathbf{x}^{k+1} + \mathbf{B}\mathbf{z}^{k+1}), \label{eq:pure_ADMM:l}
\end{align}
\end{subequations}
where $\mathbf{P} \succeq \mathbf{0},$ $\mathbf{Q} \succeq \mathbf{0}$ and $\eta \in \left(0,\frac{1+\sqrt{5}}{2}\right).$ Note that \eqref{eq:pure_ADMM} becomes the classic ADMM when $\eta =1,$ $\mathbf{P} = \mathbf{0}$ and $\mathbf{Q} = \mathbf{0}.$
The iterates in \eqref{eq:pure_ADMM} for $\mathbf{Q} = \mathbf{0}$ can be manipulated so that their distributed nature is more obvious. In the process of doing so, the effect of the iterate in $\mathbf{z}$ \eqref{eq:pure_ADMM:z} can be incorporated in the iterates in $\mathbf{x}$ \eqref{eq:pure_ADMM:x} and $\boldsymbol{\lambda}$ \eqref{eq:pure_ADMM:l} due to \eqref{eq:pure_ADMM:z} having a closed form solution. Further, under suitable initialization conditions (cf. Assumption \ref{assumption:initialization} in Section \ref{section:algorithm}), instead of updating the multipliers $\boldsymbol{\lambda}^k$ a related variable of smaller dimension, $\boldsymbol{\phi}^k$ is updated. Finally, for the iterate \eqref{eq:pure_ADMM:x} to be distributable across nodes we require that $\mathbf{P} = \boldsymbol{\Pi} \otimes \mathbf{I}_p,$ where $\boldsymbol{\Pi} \in \mathbb{S}^{n}_+$ is a positive semi-definite diagonal matrix with diagonal elements $\pi_i.$ Given the appropriate assumptions on the objective function $f$ Generalized ADMM converges even if $\mathbf{P}$ is indefinite \cite{global_linear}. Then, as showed in \cite{dadmm},
by computing
\begin{subequations}
\label{eq:admm_compare}
\begin{align}
& \mathbf{x}^{k+1} := \text{arg }\underset{\mathbf{x} \in \mathbb{R}^{np}} \quad f(\mathbf{x})+ \label{eq:admmxiterate}\\
& \qquad \left(\mathbf{E}_{\text{o}}^T\boldsymbol{\alpha}^k - \frac{\rho}{2}\mathbf{E}_{\text{u}}^T\mathbf{E}_{\text{u}}\mathbf{x}^k\right)^T\mathbf{x} + \frac{\rho}{2}\|\mathbf{x}\|_{\mathbf{D}}^2 + \frac{1}{2}\|\mathbf{x}-\mathbf{x}^k\|^2_{\mathbf{P}} \nonumber \\
& \boldsymbol{\alpha}^{k+1}:= \boldsymbol{\alpha}^k + \frac{\eta \rho}{2}\mathbf{E}_{\text{o}}\mathbf{x}^{k+1}
\end{align}
for $k \geq 0$ one obtains the same sequence of $\mathbf{x}^k$ iterates as for \eqref{eq:pure_ADMM}. Further, by defining $\boldsymbol{\phi}^k \triangleq \mathbf{E}_{\text{o}}^T\boldsymbol{\alpha}^k$ 
where $\boldsymbol{\phi}^k \triangleq [\boldsymbol{\phi}_1^k;\hdots;\boldsymbol{\phi}_n^k],$
\end{subequations}
the iterates are shown to decouple and can therefore be expressed as
\begin{subequations}
\label{eq:dadmm}
\begin{align}
 \mathbf{x}_i^{k+1}:= \text{arg }\underset{\mathbf{x}_i \in \mathbb{R}^p}{\text{min}} \quad f_i(\mathbf{x}_i) + \label{eq:dadmm_objective}\\ \left(\boldsymbol{\phi}_i^k - \rho \sum_{j \in \mathcal{N}_i}[\mathbf{x}_i^{k} + \mathbf{x}_j^{k}] \right)^T\mathbf{x}_i + \frac{\rho d_i}{2}\|\mathbf{x}_i\|^2 + \frac{\pi_i}{2}\|\mathbf{x}_i - \mathbf{x}_i^k\|^2  \nonumber \\
 \boldsymbol{\phi}_i^k := \boldsymbol{\phi}_{i}^k + \eta \rho \sum_{i \in \mathcal{N}_j}[\mathbf{x}_i^k - \mathbf{x}_j^k] \label{eq:dadmm_dual}.
\end{align}
\end{subequations}

Note that for agent $i$ to be able to update $\boldsymbol{\phi}^{k+1}_i,$ it needs the updates $\{\mathbf{x}_j^{k+1}\}_{j \in \mathcal{N}_i}$ from its neighbors. Hence, after each agent $i$ has updated its own variable $\mathbf{x}_i^{k+1}$ it will broadcast its value within its immediate neighborhood.

We now proceed to introduce the notions we require to establish linear convergence of generalized distributed ADMM (for $\mathbf{Q} = \mathbf{0}$) under the strong convexity of $\bar{f}$ without requiring strong convexity of any of the function components $f_i.$ For simplicity, we will from now on refer to the algorithm in \eqref{eq:dadmm} as generalized D-ADMM.
\section{Approximate Method of Multipliers \label{section:algorithm}}

In this section  we establish that generalized D-ADMM \cite{dadmm} is equivalent to a partially approximated version of the method of multipliers when applied to a suitable equivalent problem (cf. \eqref{eq:equivalent_problem}).
 We establish the equivalence of \eqref{eq:equivalent_problem} and \eqref{eq:distributed} in Lemma \ref{lemma:equivalence}. We then show that by applying the method of multipliers to \eqref{eq:equivalent_problem} and by applying a suitable approximation we recover generalized D-ADMM. Finally, in order to establish the desired convergence result we require that the objective function in problem \eqref{eq:equivalent_problem} is restricted strongly convex (cf. Definition \ref{def:restricted}). We establish that this is the case in Lemma \ref{lemma:restricted}. 

The optimization problem in \eqref{eq:original} can be equivalently formulated as
\begin{subequations}
\label{eq:equivalent_problem}
\begin{align}
\underset{\mathbf{x} \in \mathbb{R}^{np}}{\text{min}} & \quad g(\mathbf{x}) \triangleq f(\mathbf{x}) + \frac{\rho (1-\eta)}{4} \|\mathbf{E}_{\text{o}}\mathbf{x}\|^2, \\
\text{s.t.} & \quad \sqrt{\eta} \mathbf{E}_{\text{o}}\mathbf{x} = \mathbf{0},
\end{align}
\end{subequations}
where $\rho > 0$ and $\eta \in (0,1)$. The problem \eqref{eq:equivalent_problem} has optimality conditions
\begin{subequations}
\begin{align}
\nabla f(\mathbf{x}^{\star}) + \frac{\rho(1-\eta)}{2} \mathbf{E}_{\text{o}}^T\mathbf{E}_{\text{o}}\mathbf{x}^{\star} + \sqrt{\eta} \mathbf{E}_{\text{o}}^T\boldsymbol{\alpha}^{\star} = \mathbf{0} \\
\mathbf{E}_{\text{o}}\mathbf{x}^{\star} = \mathbf{0}.
\end{align}
\end{subequations}

\begin{lemma}[Equivalence of \eqref{eq:distributed} and \eqref{eq:equivalent_problem}] Problems \eqref{eq:distributed} and \eqref{eq:equivalent_problem} are equivalent in the sense that there exists a one to one known mapping between their primal-dual optimal points.
\label{lemma:equivalence}
\end{lemma}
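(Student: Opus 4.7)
The plan is to produce explicit, mutually inverse maps between the primal-dual optimal sets of \eqref{eq:distributed} and \eqref{eq:equivalent_problem} by comparing their KKT systems. This is almost entirely a routine matching exercise once the structure of $\mathbf{A}$, $\mathbf{B}$, and $\mathbf{E}_{\text{o}}$ is exploited.

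First I would identify the primal feasible sets. The equality $\mathbf{A}\mathbf{x}+\mathbf{B}\mathbf{z}=\mathbf{0}$ decomposes as the pair $\mathbf{A}_{\text{s}}\mathbf{x}=\mathbf{z}$ and $\mathbf{A}_{\text{d}}\mathbf{x}=\mathbf{z}$; subtracting gives $\mathbf{E}_{\text{o}}\mathbf{x}=\mathbf{0}$, while conversely, given $\mathbf{E}_{\text{o}}\mathbf{x}=\mathbf{0}$, setting $\mathbf{z}\triangleq \mathbf{A}_{\text{s}}\mathbf{x}$ (which then equals $\mathbf{A}_{\text{d}}\mathbf{x}$) recovers feasibility for \eqref{eq:distributed}. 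Since $\sqrt{\eta}\neq 0$, the constraint of \eqref{eq:equivalent_problem} is equivalent to $\mathbf{E}_{\text{o}}\mathbf{x}=\mathbf{0}$ as well, so the two $\mathbf{x}$-feasible sets coincide and $\mathbf{z}$ is uniquely determined by $\mathbf{x}$ on this set. On the common feasible set the penalty term $\frac{\rho(1-\eta)}{4}\|\mathbf{E}_{\text{o}}\mathbf{x}\|^2$ vanishes, so $g(\mathbf{x})=f(\mathbf{x})$. This yields the primal bijection $\mathbf{x}^{\star}\leftrightarrow(\mathbf{x}^{\star},\mathbf{A}_{\text{s}}\mathbf{x}^{\star})$ between optimal primal points.

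Next I would derive the dual correspondence by comparing stationarity conditions. For \eqref{eq:distributed}, writing $\boldsymbol{\lambda}=[\boldsymbol{\alpha};\boldsymbol{\beta}]$, the condition $\nabla_{\mathbf{z}}\mathcal{L}=\mathbf{0}$ forces $\boldsymbol{\beta}^{\star}=-\boldsymbol{\alpha}^{\star}$; substituting this into $\nabla_{\mathbf{x}}\mathcal{L}=\mathbf{0}$ and using $\mathbf{A}_{\text{s}}^T-\mathbf{A}_{\text{d}}^T=\mathbf{E}_{\text{o}}^T$ collapses stationarity to $\nabla f(\mathbf{x}^{\star})+\mathbf{E}_{\text{o}}^T\boldsymbol{\alpha}^{\star}=\mathbf{0}$. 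The stationarity condition of \eqref{eq:equivalent_problem}, after using $\mathbf{E}_{\text{o}}\mathbf{x}^{\star}=\mathbf{0}$ to kill the quadratic penalty's gradient, reads $\nabla f(\mathbf{x}^{\star})+\sqrt{\eta}\,\mathbf{E}_{\text{o}}^T\boldsymbol{\alpha}^{\star}_{(10)}=\mathbf{0}$, where I write $\boldsymbol{\alpha}^{\star}_{(10)}$ for the dual of \eqref{eq:equivalent_problem}. Matching the two suggests the bijection $\boldsymbol{\alpha}^{\star}=\sqrt{\eta}\,\boldsymbol{\alpha}^{\star}_{(10)}$, $\boldsymbol{\beta}^{\star}=-\sqrt{\eta}\,\boldsymbol{\alpha}^{\star}_{(10)}$ with inverse $\boldsymbol{\alpha}^{\star}_{(10)}=\boldsymbol{\alpha}^{\star}/\sqrt{\eta}$. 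A direct substitution then verifies that each map sends a KKT tuple of one problem to a KKT tuple of the other.

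The main obstacle I anticipate is the non-uniqueness of dual multipliers when $\mathbf{E}_{\text{o}}^T$ has a nontrivial kernel: the multiplier is only determined modulo $\mathrm{null}(\mathbf{E}_{\text{o}}^T)$, so strictly speaking there is a one-to-one correspondence between equivalence classes rather than individual vectors. I would handle this either by stating the bijection on such equivalence classes or, more practically, by restricting duals to $\mathrm{range}(\mathbf{E}_{\text{o}})$, which is the natural invariant subspace for the dual iterates under the initialization adopted later in the paper. Once this point is fixed, the rest of the proof is pure algebraic substitution and adds no further difficulty.
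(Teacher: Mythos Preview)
Your proposal is correct and follows essentially the same route as the paper: both arguments decompose the constraint $\mathbf{A}\mathbf{x}+\mathbf{B}\mathbf{z}=\mathbf{0}$ into $\mathbf{A}_{\text{s}}\mathbf{x}=\mathbf{z}=\mathbf{A}_{\text{d}}\mathbf{x}$ to obtain $\mathbf{E}_{\text{o}}\mathbf{x}=\mathbf{0}$, use $\mathbf{B}^T\boldsymbol{\lambda}^\star=\mathbf{0}$ to force $\boldsymbol{\beta}^\star=-\boldsymbol{\alpha}^\star$, kill the penalty gradient via feasibility, and identify the dual scaling $\boldsymbol{\alpha}^\star=\sqrt{\eta}\,\boldsymbol{\nu}^\star$. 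Your remark on dual non-uniqueness modulo $\mathrm{null}(\mathbf{E}_{\text{o}}^T)$ is more careful than the paper's proof of this lemma; the paper tacitly defers that point to Lemma~\ref{lemma:column_space}, where the restriction to $\mathrm{range}(\mathbf{E}_{\text{o}})$ is made explicit.
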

\begin{proof}
Before comparing the optimality conditions of the two problems we need to take a closer look at the structure of $\mathbf{E}_{\text{o}}.$ $\mathbf{E}_{\text{o}}$ can be written as $\mathbf{E}_{\text{o}}^T\mathbf{E}_{\text{o}} = \mathbf{L}_{\mathcal{G}} \otimes \mathbf{I}_{p},$ where $\mathbf{L}_{\mathcal{G}}$ denotes the oriented graph Laplacian matrix. If $\mathcal{G}$ is connected $\mathbf{L}_{\mathcal{G}}$ has rank $n-1$ and $\text{null}\{\mathbf{L}_{\mathcal{G}}\} = \text{span}\{\mathbf{1}_n\}$ \cite{graph}, where $\mathbf{1}_n$ denotes the all ones vector of length $n.$ Based on this, we want to find the set of vectors that span the null space of $\mathbf{E}_{\text{o}}.$ The null space will be spanned by the vectors $\mathbf{x} \in \mathbb{R}^{np}$ such that $\mathbf{E}_{\text{o}}\mathbf{x} = \mathbf{0},$
which is equivalently written as
\begin{equation}
\label{eq:null}
\mathbf{L}_{\mathcal{G}}\mathbf{X} = \mathbf{0},
\end{equation}
where $\mathbf{X} \triangleq [\mathbf{x}_1^T;\hdots;\mathbf{x}_n^T] \in \mathbb{R}^{n\times p}.$ Since $\{\mathbf{L}(\mathcal{G})\} = \text{span}\{\mathbf{1}_n\}$ \eqref{eq:null} will hold true if and only if $\mathbf{x}$ is consensual. Hence, the null space of $\mathbf{E}_{\text{o}}$ is the space of consensual vectors.

We are now ready to compare optimality conditions. A primal dual point $(\mathbf{x}^*,\mathbf{z}^*,\boldsymbol{\lambda}^*)$ is a solution to \eqref{eq:distributed} if it fulfills
\begin{subequations}
\label{eq:distr_optimality}
\begin{align}
\nabla f(\mathbf{x}) + \mathbf{A}^T\boldsymbol{\lambda}^{\star} = \mathbf{0} \\
\mathbf{B}^T\boldsymbol{\lambda}^{\star} = \mathbf{0} \label{eq:half_dual}\\
\mathbf{A}\mathbf{x}^{\star} + \mathbf{B}\mathbf{z}^{\star} = \mathbf{0}. \label{eq:equality}
\end{align}
\end{subequations}
On the other hand, a primal dual optimal point $(\mathbf{x}^{\star},\boldsymbol{\nu}^{\star})$ is a solution to \eqref{eq:equivalent_problem} if it fulfills
\begin{subequations}
\label{eq:equivalent_optimality}
\begin{align}
\nabla f(\mathbf{x}^{\star}) + \frac{\rho(1-\eta)}{2} \mathbf{E}_{\text{o}}^T\mathbf{E}_{\text{o}}\mathbf{x}^{\star} + \sqrt{\eta} \mathbf{E}_{\text{o}}^T\boldsymbol{\nu}^{\star} = \mathbf{0} \label{eq:opt_primal} \\
\mathbf{E}_{\text{o}}\mathbf{x}^{\star} = \mathbf{0}. \label{eq:consensus}
\end{align}
\end{subequations}
Following the notation of Section \ref{section:problem} equation \eqref{eq:half_dual} can be expressed as $\boldsymbol{\alpha}^{*} = - \boldsymbol{\beta}^{*},$ where $\boldsymbol{\lambda}^{\star} = [\boldsymbol{\alpha}^{*};\boldsymbol{\beta}^{*}],$ implying that the optimality conditions \eqref{eq:distr_optimality} can be equivalently written as 
\begin{subequations}
\label{eq:optimal_equivalent}
\begin{align}
\nabla f(\mathbf{x}^{*}) + \mathbf{E}_{\text{o}}^T\boldsymbol{\alpha}^{*} = \mathbf{0} \\
\mathbf{E}_{\text{o}}\mathbf{x}^* = \mathbf{0},
\end{align}
\end{subequations}
where we have used that \eqref{eq:equality} can be equivalently expressed by the pair $\mathbf{A}_s \mathbf{x}^{*} = \mathbf{z}^*$ and $\mathbf{A}_d \mathbf{x}^{*} = \mathbf{z}^{*}$ which can be used to eliminate $\mathbf{z}^{*}$ yielding $\mathbf{E}_{\text{o}}\mathbf{x}^{\star} = \mathbf{0}.$
We now focus our attention on the optimality conditions in \eqref{eq:equivalent_optimality}. \eqref{eq:consensus} allows us to claim that \eqref{eq:opt_primal} can be equivalently written as
\begin{align}
\nabla f(\mathbf{x}^{\star}) + \mathbf{E}_{\text{o}}^T(\sqrt{\eta}\boldsymbol{\nu}^{\star}) = \mathbf{0} \\
\mathbf{E}_{\text{o}}\mathbf{x}^{\star} = \mathbf{0},
\end{align}
which is identical to the optimality conditions in \eqref{eq:optimal_equivalent} except for a scaling factor $\sqrt{\eta}$, i.e. $(\mathbf{x}^{*},\boldsymbol{\alpha}^{*}) = (\mathbf{x}^{\star},\sqrt{\eta}\boldsymbol{\nu}^{\star}).$
\end{proof}
We next establish that the iterates \eqref{eq:dadmm} can be obtained by applying the method of multipliers to \eqref{eq:equivalent_optimality} and then suitably approximating $\|\mathbf{E}_{\text{o}}\mathbf{x}\|^2.$ 
Applying the method of multipliers to \eqref{eq:equivalent_problem} with step size $\frac{\rho}{2}$ yields the iterates
\begin{subequations}
\begin{align}
& \mathbf{x}^{k+1} := \text{arg }\underset{\mathbf{x}}{\text{min}} \quad f(\mathbf{x}) + (\sqrt{\eta} \boldsymbol{\nu}^k)^T \mathbf{E}_{\text{o}}\mathbf{x} + \frac{\rho}{4} \|\mathbf{E}_{\text{o}}\mathbf{x}\|^2, \label{eq:mmprimal} \\
& \boldsymbol{\nu}^{k+1} := \boldsymbol{\nu}^k + \sqrt{\eta} \frac{\rho}{2} \mathbf{E}_{\text{o}}\mathbf{x}^{k+1}.
\end{align}
\end{subequations}
 Note that \eqref{eq:mmprimal} can not be solved in a distributed manner due to the term $\|\mathbf{E}_{\text{o}}\mathbf{x}\|^2$ which can be alternatively written as $ \sum_{i=1}^{n} \sum_{j \in \mathcal{N}_i}\|\mathbf{x}_j-\mathbf{x}_i\|^2.$ In order to be able to carry an iterate in $\mathbf{x}^{k+1}$ in a distributed manner we upper bound $\|\mathbf{E}_{\text{o}}\mathbf{x}\|^2$ for all $\mathbf{x}$ as
\begin{equation}
\|\mathbf{E}_{\text{o}}\mathbf{x}\|^2 \leq \|\mathbf{E}_{\text{o}}\mathbf{x}^k\|^2 + 2\left( \mathbf{E}_{\text{o}}^T\mathbf{E}_{\text{o}}\mathbf{x}^k \right)^T (\mathbf{x} - \mathbf{x}^k) + \|\mathbf{x} - \mathbf{x}^k\|^2_{\boldsymbol{\Gamma}}, 
\end{equation}
with $\boldsymbol{\Gamma} \succeq \mathbf{E}_{\text{o}}^T\mathbf{E}_{\text{o}}.$ In order to obtain this upper bound, we have used the fact that for a convex function $f$ with Lipschitz continuous gradients with Lipschitz modulus $L$ it holds true that
\begin{equation}
f(\mathbf{y}) \leq f(\mathbf{x}) + \nabla f(\mathbf{x})^T(\mathbf{y}-\mathbf{x}) + \frac{L}{2}\|\mathbf{x}-\mathbf{y}\|^2.
\end{equation}
Since $\mathbf{D} = \frac{1}{2}(\mathbf{E}_{\text{o}}^T\mathbf{E}_{\text{o}} + \mathbf{E}_u^T\mathbf{E}_u)$ we choose $\boldsymbol{\Gamma} = 2\mathbf{D} + 2\epsilon\mathbf{P},$ for $\epsilon \geq 0.$ Choosing $\boldsymbol{\Gamma}$ as a diagonal matrix allows for the iterates in $\mathbf{x}^{k+1}$ to be distributed. Hence, we finally have the following algorithm
\begin{subequations}
\label{eq:mmconvergence}
\begin{align}
& \mathbf{x}^{k+1} := \text{arg }\underset{\mathbf{x} \in \mathbb{R}^{np}}{\text{min}} \quad  f(\mathbf{x}) + (\sqrt{\eta}\boldsymbol{\nu}^k)^T\mathbf{E}_{\text{o}}\mathbf{x}  + \label{eq:mmxiterate} \\ & \qquad \frac{\rho}{2} (\mathbf{E}_{\text{o}}^T\mathbf{E}_{\text{o}}\mathbf{x}^k)^T \mathbf{x}  + \frac{\rho}{2} \|\mathbf{x} - \mathbf{x}^k\|_{\mathbf{D} + \epsilon \mathbf{P}}^2 \nonumber\\
& \boldsymbol{\nu}^{k+1} := \boldsymbol{\nu}^{k} + \frac{\rho}{2} \sqrt{\eta} \mathbf{E}_{\text{o}}\mathbf{x}^{k+1}.
\end{align}
\end{subequations}

We will now manipulate the iterate in \eqref{eq:mmxiterate} to make it comparable to \eqref{eq:admmxiterate}. In particular the part of $\frac{\rho}{2} \|\mathbf{x} - \mathbf{x}^{k}\|^2_{\mathbf{D}}$ that depends on $\mathbf{x}$ is $\frac{\rho}{2} \|\mathbf{x}\|_{\mathbf{D}}^2 - \rho (\mathbf{x}^k)^T\mathbf{D}\mathbf{x}.$ Further, using that $2\mathbf{D} = \mathbf{E}_{\text{o}}^T\mathbf{E}_{\text{o}} + \mathbf{E}_{\text{u}}^T\mathbf{E}_{\text{u}}$ we have that the iterate \eqref{eq:mmxiterate} can be equivalently written as
\begin{align}
&\mathbf{x}^{k+1} := \text{arg }\underset{\mathbf{x} \in \mathbb{R}^{np}}{\text{min}} \quad f(\mathbf{x}) + (\sqrt{\eta} \mathbf{E}_{\text{o}}^T\boldsymbol{\nu}^k - \frac{\rho}{2} \mathbf{E}_{\text{u}}^T\mathbf{E}_{\text{u}}\mathbf{x}^k)^T\mathbf{x} \\
& + \frac{\rho}{2} \|\mathbf{x}\|_{\mathbf{D}}^2 + \frac{\rho}{2} \epsilon\|\mathbf{x}-\mathbf{x}^k\|_{\mathbf{P}}^2. \nonumber
\end{align}
If $\sqrt{\eta} \mathbf{E}_{\text{o}}^T\boldsymbol{\nu}^0 = \mathbf{E}_{\text{o}}^T\boldsymbol{\alpha}^0,$ and setting $\epsilon = \frac{1}{\rho}$ we can re-write the equations above as
\begin{subequations}
\label{eq:mmcompare}
\begin{align}
& \mathbf{x}^{k+1} : = \text{arg }\underset{\mathbf{x} \in \mathbb{R}^{np}}{\text{min}} \quad f(\mathbf{x}) + \left(\mathbf{E}_{\text{o}}^T\boldsymbol{\alpha}^k - \frac{\rho}{2}\mathbf{E}_{\text{u}}^T\mathbf{E}_{\text{u}}\mathbf{x}^k\right)^T\mathbf{x} \\
& \qquad \frac{\rho}{2}\|\mathbf{x}\|_{\mathbf{D}}^2 + \frac{1}{2}\|\mathbf{x} - \mathbf{x}^k\|_{\mathbf{P}}^2  \nonumber\\
& \boldsymbol{\alpha}^{k+1} := \boldsymbol{\alpha}^k + \frac{\rho\eta}{2}\mathbf{E}_{\text{o}}\mathbf{x}^{k+1},
\end{align}
\end{subequations}  
which is equivalent to the iterates in \eqref{eq:admm_compare} for $\eta \in (0,1).$

We will now show that $g$ (cf. \eqref{eq:equivalent_problem}) is restricted strongly convex with respect to $\mathbf{x}^{\star}.$ However, for the sake of clarity we first define restricted strong convexity.  
\begin{definition}[Restricted Strong Convexity]\label{def:restricted}
A differentiable and convex function $h(\mathbf{x})$ is restricted strongly convex with respect to a point $\mathbf{\tilde{x}}$ if for all $\mathbf{x}$ it holds that
\begin{equation}
(\nabla h(\mathbf{x}) - \nabla h(\mathbf{\tilde{x}}))^T(\mathbf{x} - \mathbf{\tilde{x}}) \geq \mu_h \|\mathbf{x}-\mathbf{\tilde{x}}\|^2
\end{equation}
for some restricted strong convexity constant $\mu_h > 0.$
\end{definition}
\begin{lemma}[Restricted strong convexity, cf. \cite{extra} \label{lemma:restricted}]
If $\bar{f}$ (cf. \eqref{eq:original}) is strongly convex then $g$ (cf. \eqref{eq:equivalent_problem}) is restricted strongly convex with respect to $\mathbf{x}^{\star}$ with restricted strong convexity constant 
\begin{equation}
\label{eq:restricted_constant}
\mu_g \geq \left\{\frac{\mu_{\bar{f}}}{n} - 2L\gamma, \frac{\tilde{\lambda}_{\text{min}}(\mathbf{E}_{\text{o}}^T\mathbf{E}_{\text{o}})\rho(1-\eta)}{2(1+ \frac{1}{\gamma^2})} \right\},
\end{equation} 
for any $\gamma \in (0,\frac{(\mu_{\bar{f}}/n)}{2L}),$ where $\mu_{\bar{f}}$ denotes the strong convexity constant of $\bar{f}$ and $L$ denotes a Lipschitz modulus of the gradient of $f.$
\end{lemma}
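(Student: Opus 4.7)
The plan is to lower bound the inner product $(\nabla g(\mathbf{x}) - \nabla g(\mathbf{x}^\star))^T(\mathbf{x} - \mathbf{x}^\star)$ by splitting $\mathbf{x} - \mathbf{x}^\star$ into a consensual component and an orthogonal component, and treating each piece with a different tool: strong convexity of $\bar{f}$ on the consensual direction, and the quadratic penalty $\frac{\rho(1-\eta)}{4}\|\mathbf{E}_{\text{o}}\mathbf{x}\|^2$ together with the smallest nonzero eigenvalue of $\mathbf{E}_{\text{o}}^T\mathbf{E}_{\text{o}}$ on the orthogonal one. Concretely, introduce $\bar{\mathbf{x}} \triangleq \tfrac{1}{n}\sum_i \mathbf{x}_i$, $\mathbf{z} \triangleq \bar{\mathbf{x}} - \bar{\mathbf{x}}^\star$, and $\mathbf{y}_i \triangleq \mathbf{x}_i - \bar{\mathbf{x}}$, so that $\sum_i \mathbf{y}_i = \mathbf{0}$, the stacked vector $\mathbf{y}$ is orthogonal to the null space of $\mathbf{E}_{\text{o}}$ (which by Lemma \ref{lemma:equivalence} is the consensual subspace), and
\[
\|\mathbf{x}-\mathbf{x}^\star\|^2 \;=\; \|\mathbf{y}\|^2 + n\|\mathbf{z}\|^2.
\]

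Since $\nabla g(\mathbf{x}) = \nabla f(\mathbf{x}) + \tfrac{\rho(1-\eta)}{2}\mathbf{E}_{\text{o}}^T\mathbf{E}_{\text{o}}\mathbf{x}$ and $\mathbf{E}_{\text{o}}\mathbf{x}^\star = \mathbf{0}$, the inner product decomposes as $\mathcal{T}_f + \tfrac{\rho(1-\eta)}{2}\|\mathbf{E}_{\text{o}}\mathbf{x}\|^2$, where $\mathcal{T}_f \triangleq \sum_i (\nabla f_i(\mathbf{x}_i) - \nabla f_i(\bar{\mathbf{x}}^\star))^T(\mathbf{y}_i + \mathbf{z})$. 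For the penalty part, because $\mathbf{E}_{\text{o}}$ annihilates consensual vectors and $\mathbf{y}$ lives in the orthogonal complement of its null space, $\|\mathbf{E}_{\text{o}}\mathbf{x}\|^2 = \|\mathbf{E}_{\text{o}}\mathbf{y}\|^2 \geq \tilde{\lambda}_{\min}(\mathbf{E}_{\text{o}}^T\mathbf{E}_{\text{o}})\|\mathbf{y}\|^2$.

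For $\mathcal{T}_f$, the key manipulation is to add and subtract $\nabla f_i(\bar{\mathbf{x}})$ inside each summand, producing four groups of terms. Two are non-negative and admit direct lower bounds: $\sum_i (\nabla f_i(\mathbf{x}_i) - \nabla f_i(\bar{\mathbf{x}}))^T \mathbf{y}_i \geq 0$ by convexity (monotonicity of $\nabla f_i$), and $\sum_i (\nabla f_i(\bar{\mathbf{x}}) - \nabla f_i(\bar{\mathbf{x}}^\star))^T \mathbf{z} = (\nabla \bar{f}(\bar{\mathbf{x}}) - \nabla \bar{f}(\bar{\mathbf{x}}^\star))^T \mathbf{z} \geq \mu_{\bar{f}}\|\mathbf{z}\|^2$ by strong convexity of $\bar{f}$ --- this is the only place the aggregate assumption is used, and it is only available because consensual averaging reassembles $\bar{f}$ from its components. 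The two remaining cross terms are bounded in absolute value using $L$-smoothness of the $f_i$'s, the triangle and Cauchy--Schwarz inequalities, and Young's inequality with parameter $\gamma$, giving an estimate of the form $\mathcal{T}_f \geq c_1(\gamma)\|\mathbf{z}\|^2 - c_2(\gamma)\|\mathbf{y}\|^2$.

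Combining the two estimates yields a bound of the shape $a_{\mathbf{z}}(\gamma)\|\mathbf{z}\|^2 + a_{\mathbf{y}}(\gamma)\|\mathbf{y}\|^2$, after which $\mu_g$ is identified as the smaller of $a_{\mathbf{z}}(\gamma)/n$ and $a_{\mathbf{y}}(\gamma)$, so that the total is at least $\mu_g(\|\mathbf{y}\|^2 + n\|\mathbf{z}\|^2) = \mu_g\|\mathbf{x}-\mathbf{x}^\star\|^2$, matching the two expressions in \eqref{eq:restricted_constant}. The constraint $\gamma \in (0,(\mu_{\bar{f}}/n)/(2L))$ is exactly what is needed to keep the consensual coefficient strictly positive, and $\gamma$ parameterizes a trade-off between the consensual and orthogonal pieces. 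The main technical obstacle is the bookkeeping in the Young-inequality step: one has to choose the Young splits carefully so that the resulting constants take the particular form stated in the lemma, and one must verify that the uniform Lipschitz modulus $L$ of $\nabla f$ may be used in place of the individual $L_i$'s when bounding the cross terms.
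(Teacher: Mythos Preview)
Your proposal is correct and follows essentially the same approach as the paper, which does not give a self-contained argument but simply cites Appendix~1 of \cite{extra} (the EXTRA paper) and notes that the proof carries over verbatim once one identifies $\mathbf{E}_{\text{o}}^T\mathbf{E}_{\text{o}} = \mathbf{L}_{\mathcal{G}}\otimes\mathbf{I}_p$. What you have written out---splitting $\mathbf{x}-\mathbf{x}^\star$ into its consensual mean and the zero-mean residual, using $\mu_{\bar f}$ on the former, the smallest nonzero eigenvalue of $\mathbf{E}_{\text{o}}^T\mathbf{E}_{\text{o}}$ on the latter, and Young's inequality on the cross terms---is precisely the argument in that appendix, adapted to the vectorized setting here.
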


\begin{proof}
This proof is nearly identical to that in Appendix 1 in \cite{extra}, with the difference being that the quantities here are defined as vectors instead of matrices. It is sufficient to notice that one can easily go from one formulation to the other realizing that $\mathbf{E}_o^T\mathbf{E}_o = \mathbf{L}_{\mathcal{G}}\otimes \mathbf{I}_{p}.$ 

By selecting $(1-\eta)\rho = \frac{2(\gamma^2+1)(\mu_{\bar{f}}/n - 2L\gamma)}{\gamma^2\tilde{\lambda}_{\text{min}}(\mathbf{E}_{\text{o}}^T\mathbf{E}_{\text{o}})},$ in \eqref{eq:restricted_constant}  $\mu_g$ can be arbitrarily close to $\frac{\mu_{\bar{f}}}{n}$ if $\gamma$ is made sufficiently small and consequently $(1-\eta)\rho$ sufficiently large.
\end{proof}

We now introduce some results regarding \eqref{eq:dadmm} needed to establish the desired linear convergence result. These results are introduced in the form of Lemmas without proof since they are largely identical to the proofs in \cite{dadmm} and \cite{track}.
\begin{lemma}[cf. \cite{track}, \cite{dadmm}] \label{lemma:column_space} Given an optimal primal solution $\mathbf{x}^{\star}$ of \eqref{eq:distributed} there exist multiple optimal multipliers $\boldsymbol{\lambda}^{\star} = [\boldsymbol{\alpha}^{\star};\boldsymbol{\beta}^{\star}]$ where $\boldsymbol{\alpha}^{\star} = - \boldsymbol{\beta}^{\star}$ such that every $(\mathbf{x}^{\star},\boldsymbol{\lambda}^{\star})$ is a primal-dual optimal pair. Among all these optimal multipliers, there exists a unique $\boldsymbol{\lambda}^{\star} = [\boldsymbol{\alpha}^{\star};\boldsymbol{\beta}^{\star}]$ such that $\boldsymbol{\alpha}^{\star} = -\boldsymbol{\beta}^{\star}$ lies in the column space of $\mathbf{E}_{\text{o}}.$
\end{lemma}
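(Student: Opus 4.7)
The plan is to work directly from the KKT conditions of problem \eqref{eq:distributed} and exploit the standard orthogonal decomposition $\mathbb{R}^{mp} = \operatorname{range}(\mathbf{E}_{\mathrm o}) \oplus \operatorname{null}(\mathbf{E}_{\mathrm o}^T)$. The statement factors cleanly into an existence claim and a uniqueness claim, and each reduces to a linear-algebra fact once the KKT system has been simplified.

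First I would start from the optimality conditions \eqref{eq:distr_optimality}. The second condition, $\mathbf{B}^T \boldsymbol{\lambda}^{\star} = \mathbf{0}$, together with $\mathbf{B} = [-\mathbf{I}_{mp};-\mathbf{I}_{mp}]$, forces $\boldsymbol{\alpha}^{\star} = -\boldsymbol{\beta}^{\star}$, as already observed in the proof of Lemma \ref{lemma:equivalence}. Substituting this into the first KKT condition collapses $\mathbf{A}^T \boldsymbol{\lambda}^{\star} = \mathbf{A}_{\mathrm s}^T \boldsymbol{\alpha}^{\star} + \mathbf{A}_{\mathrm d}^T \boldsymbol{\beta}^{\star}$ to $\mathbf{E}_{\mathrm o}^T \boldsymbol{\alpha}^{\star}$, so that the entire dual part of the KKT system reduces to the single linear equation
\begin{equation}
\mathbf{E}_{\mathrm o}^T \boldsymbol{\alpha}^{\star} = -\nabla f(\mathbf{x}^{\star}).
\end{equation}
By assumption $\mathbf{x}^{\star}$ is primal optimal, so this equation admits at least one solution $\boldsymbol{\alpha}^{\star}_0$, and the general solution set is the affine subspace $\boldsymbol{\alpha}^{\star}_0 + \operatorname{null}(\mathbf{E}_{\mathrm o}^T)$.

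For the multiplicity part, I would observe that $\mathbf{E}_{\mathrm o}^T\mathbf{E}_{\mathrm o} = \mathbf{L}_{\mathcal G}\otimes \mathbf{I}_p$ has rank $p(n-1)$ on a connected graph (as noted in the proof of Lemma \ref{lemma:equivalence}), whereas $\mathbf{E}_{\mathrm o}^T \in \mathbb{R}^{np\times mp}$; since any connected graph with $n\ge 2$ has $m\ge n-1$, the null space of $\mathbf{E}_{\mathrm o}^T$ is nontrivial, so the affine set of admissible $\boldsymbol{\alpha}^{\star}$ contains infinitely many points. For each such $\boldsymbol{\alpha}^{\star}$ the vector $\boldsymbol{\lambda}^{\star}=[\boldsymbol{\alpha}^{\star};-\boldsymbol{\alpha}^{\star}]$ is an optimal multiplier, establishing the first claim.

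For the uniqueness-in-column-space part, I would use the orthogonal decomposition $\mathbb{R}^{mp} = \operatorname{range}(\mathbf{E}_{\mathrm o}) \oplus \operatorname{null}(\mathbf{E}_{\mathrm o}^T)$. Writing any admissible dual $\boldsymbol{\alpha}^{\star} = \boldsymbol{\alpha}^{\star}_R + \boldsymbol{\alpha}^{\star}_N$ with $\boldsymbol{\alpha}^{\star}_R \in \operatorname{range}(\mathbf{E}_{\mathrm o})$ and $\boldsymbol{\alpha}^{\star}_N \in \operatorname{null}(\mathbf{E}_{\mathrm o}^T)$, the kernel component is annihilated by $\mathbf{E}_{\mathrm o}^T$, so $\boldsymbol{\alpha}^{\star}_R$ alone satisfies $\mathbf{E}_{\mathrm o}^T \boldsymbol{\alpha}^{\star}_R = -\nabla f(\mathbf{x}^{\star})$ and is itself an admissible dual lying in the column space. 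For uniqueness, if two candidates $\boldsymbol{\alpha}^{\star}_1, \boldsymbol{\alpha}^{\star}_2 \in \operatorname{range}(\mathbf{E}_{\mathrm o})$ both satisfy the equation, their difference lies in $\operatorname{range}(\mathbf{E}_{\mathrm o}) \cap \operatorname{null}(\mathbf{E}_{\mathrm o}^T) = \{\mathbf 0\}$, so they coincide. The only mildly delicate step is justifying the orthogonal decomposition, but this is the standard fundamental-theorem-of-linear-algebra identity $\operatorname{range}(\mathbf{M})^\perp = \operatorname{null}(\mathbf{M}^T)$ applied to $\mathbf{M} = \mathbf{E}_{\mathrm o}$, so no real obstacle arises.
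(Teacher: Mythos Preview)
Your argument is correct and is precisely the standard route: reduce the KKT system to $\mathbf{E}_{\mathrm o}^T\boldsymbol{\alpha}^{\star}=-\nabla f(\mathbf{x}^{\star})$, note that the solution set is an affine translate of $\operatorname{null}(\mathbf{E}_{\mathrm o}^T)$, and pick out the unique representative in $\operatorname{range}(\mathbf{E}_{\mathrm o})$ via the orthogonal decomposition $\mathbb{R}^{mp}=\operatorname{range}(\mathbf{E}_{\mathrm o})\oplus\operatorname{null}(\mathbf{E}_{\mathrm o}^T)$. The paper itself does not give a proof of this lemma; it explicitly states the result without proof and refers to \cite{track} and \cite{dadmm}, where the same argument appears. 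So there is nothing to compare against beyond confirming that your approach matches the one in those references, which it does.

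One small bookkeeping point: for the multiplicity claim you write ``$m\ge n-1$, so $\operatorname{null}(\mathbf{E}_{\mathrm o}^T)$ is nontrivial,'' but the null space has dimension $p(m-n+1)$, which requires the strict inequality $m>n-1$. In the paper's model the arcs are bidirectional (each undirected edge contributes two arcs), so in fact $m\ge 2(n-1)>n-1$ whenever $n\ge 2$, and your conclusion stands; you should just tighten the inequality or invoke the bidirectional convention explicitly.
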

Lemma \ref{lemma:column_space} provides uniqueness of the dual multiplier that lies on the column space of $\mathbf{E}_{\text{o}}.$ This is relevant because if we are capable of confining $\boldsymbol{\lambda}^{k}$ to be equal to $[\boldsymbol{\alpha}^k;-\boldsymbol{\alpha}^k]$ and $\boldsymbol{\alpha}^k$ to lie in the column space of $\mathbf{E}_{\text{o}}$ we will be converging to the unique multiplier discussed in Lemma \ref{lemma:column_space}. Assumption \ref{assumption:initialization} and Lemma \ref{lemma:multipliers} take care precisely of this.
\begin{assumption}[Initialization, cf. \cite{track}, \cite{dadmm}]\label{assumption:initialization}
The multiplier $\boldsymbol{\lambda}$ is initialized as $\boldsymbol{\lambda}^0 = [\boldsymbol{\alpha}^0 ; -\boldsymbol{\alpha}^0],$ with $\boldsymbol{\alpha}^0 \in \mathbb{R}^{mp}$ and $\mathbf{x}^0$ be initialized such that $\mathbf{E}_{\text{o}}\mathbf{x}^0 = 2\mathbf{z}^0.$ Further $\boldsymbol{\alpha}^0$ lies in the column space of $\mathbf{E}_{\text{o}}.$
\end{assumption}
\begin{lemma}[cf. \cite{track}, \cite{dadmm}]\label{lemma:multipliers} Under Assumption \ref{assumption:initialization} the iterates in \eqref{eq:pure_ADMM:l} can be replaced by the iterates $\boldsymbol{\alpha}^{k+1} := \boldsymbol{\alpha}^k + \frac{\rho}{2}\mathbf{E}_{\text{o}}\mathbf{x}^{k+1},$ where $\boldsymbol{\alpha}^k$ converges to $\boldsymbol{\alpha}^{\star}$ as defined in Lemma \ref{lemma:column_space}. Further, the iterates $\boldsymbol{\alpha}^k$ lie in the column space of $\boldsymbol{E}_{\text{o}}.$
\end{lemma}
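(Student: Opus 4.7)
The plan is to establish all three claims by a single induction on $k$: that (i) $\boldsymbol{\lambda}^k=[\boldsymbol{\alpha}^k;-\boldsymbol{\alpha}^k]$ for every $k$, that (ii) $\boldsymbol{\alpha}^k$ obeys the claimed reduced update and lies in $\text{col}(\mathbf{E}_{\text{o}})$, and that (iii) $\boldsymbol{\alpha}^k$ converges to the unique optimal multiplier singled out by Lemma \ref{lemma:column_space}. The base case for (i) and the column-space half of (ii) is delivered directly by Assumption \ref{assumption:initialization}.

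The core of the inductive step is to resolve the $\mathbf{z}$-subproblem \eqref{eq:pure_ADMM:z} in closed form. With $\mathbf{Q}=\mathbf{0}$, the first-order condition reads $\mathbf{B}^T\boldsymbol{\lambda}^k+\rho\mathbf{B}^T(\mathbf{A}\mathbf{x}^{k+1}+\mathbf{B}\mathbf{z}^{k+1})=\mathbf{0}$. Using $\mathbf{B}=[-\mathbf{I}_{mp};-\mathbf{I}_{mp}]$ one obtains $\mathbf{B}^T\mathbf{B}=2\mathbf{I}_{mp}$ and $\mathbf{B}^T\mathbf{A}=-\mathbf{E}_{\text{u}}$, while the inductive hypothesis forces $\mathbf{B}^T\boldsymbol{\lambda}^k=-(\boldsymbol{\alpha}^k+\boldsymbol{\beta}^k)=\mathbf{0}$, yielding the explicit $\mathbf{z}^{k+1}=\tfrac{1}{2}\mathbf{E}_{\text{u}}\mathbf{x}^{k+1}$. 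Substituting this back, the residual splits cleanly as $\mathbf{A}_{\text{s}}\mathbf{x}^{k+1}-\mathbf{z}^{k+1}=\tfrac{1}{2}\mathbf{E}_{\text{o}}\mathbf{x}^{k+1}$ and $\mathbf{A}_{\text{d}}\mathbf{x}^{k+1}-\mathbf{z}^{k+1}=-\tfrac{1}{2}\mathbf{E}_{\text{o}}\mathbf{x}^{k+1}$. The dual update \eqref{eq:pure_ADMM:l} therefore produces $\boldsymbol{\alpha}^{k+1}=\boldsymbol{\alpha}^k+\tfrac{\eta\rho}{2}\mathbf{E}_{\text{o}}\mathbf{x}^{k+1}$ and $\boldsymbol{\beta}^{k+1}=-\boldsymbol{\alpha}^{k+1}$, matching the form in \eqref{eq:admm_compare} and simultaneously closing (i) and (ii). The column-space property propagates because the increment is manifestly an element of $\text{col}(\mathbf{E}_{\text{o}})$ and a subspace is closed under addition.

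For (iii), I would invoke the standard convergence theorem for generalized ADMM applied to \eqref{eq:distributed} to obtain $\boldsymbol{\lambda}^k\to\boldsymbol{\lambda}^\star$ for some primal-dual optimal pair, whence $\boldsymbol{\alpha}^k\to\boldsymbol{\alpha}^\star$. Because $\text{col}(\mathbf{E}_{\text{o}})$ is closed in $\mathbb{R}^{mp}$ and each iterate lies in it, the limit $\boldsymbol{\alpha}^\star$ also lies there, and Lemma \ref{lemma:column_space} then identifies it as the unique optimal multiplier in that subspace. I do not expect a serious obstacle here: the only delicate detail is to note that, because $\mathbf{Q}=\mathbf{0}$, the $\mathbf{z}$-subproblem at $k=0$ does not depend on $\mathbf{z}^0$ at all, so the induction is kicked off purely by the structural part $\boldsymbol{\lambda}^0=[\boldsymbol{\alpha}^0;-\boldsymbol{\alpha}^0]$ of Assumption \ref{assumption:initialization}.
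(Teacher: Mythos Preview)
Your argument is correct and matches the approach the paper relies on: the paper states this lemma without proof, citing \cite{dadmm} and \cite{track}, and the informal discussion following the lemma (that the $\boldsymbol{\alpha}$-update adds only column-space increments, hence any limit lies in $\mathrm{col}(\mathbf{E}_{\text{o}})$ and is identified by Lemma~\ref{lemma:column_space}) is exactly your inductive scheme spelled out. One small remark: your derivation gives the update $\boldsymbol{\alpha}^{k+1}=\boldsymbol{\alpha}^k+\tfrac{\eta\rho}{2}\mathbf{E}_{\text{o}}\mathbf{x}^{k+1}$, which agrees with \eqref{eq:admm_compare}; the factor $\tfrac{\rho}{2}$ in the lemma statement appears to be a typo in the paper for $\tfrac{\eta\rho}{2}$.
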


Assumption \ref{assumption:initialization} confines $\boldsymbol{\lambda}^k = [\boldsymbol{\alpha}^k;-\boldsymbol{\alpha}^k].$ Hence, we only need to update the vector $\boldsymbol{\alpha}$ with lower dimensionality than $\boldsymbol{\lambda}.$ Note that the update $\boldsymbol{\alpha}^{k+1} = \boldsymbol{\alpha}^{k} + \frac{\rho}{2}\mathbf{E}_{\text{o}}\mathbf{x}^{k+1}$ updates $\boldsymbol{\alpha}$ with terms that always lie in the column space of $\mathbf{E}_{\text{o}}.$ This implies, that if the sequence $\{\boldsymbol{\alpha}^k\}$ converges it will do so to the multiplier $\boldsymbol{\alpha}^{\star}$ lying in the column space of $\mathbf{E}_{\text{o}}.$
In \eqref{eq:dadmm} the iterate is in $\boldsymbol{\phi}=\mathbf{E}_{\text{o}}^T\boldsymbol{\alpha}.$ This is because the optimality conditions of the iterate in $\mathbf{x}$ end up depending exclusively on $\mathbf{E}_{\text{o}}^T\boldsymbol{\alpha}.$ 

The next Lemma allows us to directly relate $\mathbf{z}^k$ to $\mathbf{x}^k$ allowing us to skip updating the variable $\mathbf{z}$ entirely.
\begin{lemma}[cf. \cite{track}, \cite{dadmm}] \label{lemma:byez} Under Assumption 1 the iterates $\{\mathbf{x}^k,\mathbf{z}^k\}$ in \eqref{eq:pure_ADMM} satisfy $\mathbf{z}^k = \frac{\mathbf{E}_{\text{u}}}{2}\mathbf{x}^k$.
\end{lemma}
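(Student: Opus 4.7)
The plan is to exploit the fact that the $\mathbf{z}$-subproblem \eqref{eq:pure_ADMM:z} with $\mathbf{Q}=\mathbf{0}$ is an unconstrained quadratic in $\mathbf{z}$, and so admits a closed form. Setting the gradient of $\mathcal{L}(\mathbf{x}^{k+1},\mathbf{z},\boldsymbol{\lambda}^k)$ with respect to $\mathbf{z}$ to zero gives
\begin{equation*}
\mathbf{B}^T\boldsymbol{\lambda}^k + \rho\,\mathbf{B}^T\mathbf{A}\mathbf{x}^{k+1} + \rho\,\mathbf{B}^T\mathbf{B}\,\mathbf{z}^{k+1} = \mathbf{0}.
\end{equation*}
The block structure of $\mathbf{A}$, $\mathbf{B}$ and $\boldsymbol{\lambda}^k = [\boldsymbol{\alpha}^k;\boldsymbol{\beta}^k]$ then yields $\mathbf{B}^T\mathbf{B} = 2\mathbf{I}_{mp}$, $\mathbf{B}^T\mathbf{A} = -(\mathbf{A}_{\text{s}} + \mathbf{A}_{\text{d}}) = -\mathbf{E}_{\text{u}}$, and $\mathbf{B}^T\boldsymbol{\lambda}^k = -(\boldsymbol{\alpha}^k + \boldsymbol{\beta}^k)$, so that solving for $\mathbf{z}^{k+1}$ produces
\begin{equation*}
\mathbf{z}^{k+1} = \tfrac{1}{2\rho}\bigl(\boldsymbol{\alpha}^k + \boldsymbol{\beta}^k\bigr) + \tfrac{1}{2}\mathbf{E}_{\text{u}}\mathbf{x}^{k+1}.
\end{equation*}

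Next I would show by induction on $k$ that the first term on the right always vanishes. Assumption \ref{assumption:initialization} supplies the base case $\boldsymbol{\alpha}^0 + \boldsymbol{\beta}^0 = \mathbf{0}$ together with the consistent initialization $\mathbf{z}^0 = \tfrac{1}{2}\mathbf{E}_{\text{u}}\mathbf{x}^0$. For the inductive step, suppose $\boldsymbol{\alpha}^k + \boldsymbol{\beta}^k = \mathbf{0}$; the closed form above then forces $\mathbf{z}^{k+1} = \tfrac{1}{2}\mathbf{E}_{\text{u}}\mathbf{x}^{k+1}$, and splitting the dual update \eqref{eq:pure_ADMM:l} into its $\boldsymbol{\alpha}$- and $\boldsymbol{\beta}$-components gives
\begin{equation*}
\boldsymbol{\alpha}^{k+1} + \boldsymbol{\beta}^{k+1} = (\boldsymbol{\alpha}^k + \boldsymbol{\beta}^k) + \eta\rho\bigl(\mathbf{E}_{\text{u}}\mathbf{x}^{k+1} - 2\mathbf{z}^{k+1}\bigr) = \mathbf{0}.
\end{equation*}
This closes the induction and simultaneously delivers $\mathbf{z}^k = \tfrac{1}{2}\mathbf{E}_{\text{u}}\mathbf{x}^k$ for every $k \geq 0$.

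The lemma is essentially an algebraic identity, so no analytic estimates are involved. The only real obstacle is the bookkeeping of the block-matrix products $\mathbf{B}^T\mathbf{B}$, $\mathbf{B}^T\mathbf{A}$ and $\mathbf{B}^T\boldsymbol{\lambda}^k$, and then carrying the symmetry $\boldsymbol{\alpha}^k = -\boldsymbol{\beta}^k$ through the dual update in tandem with the claim on $\mathbf{z}^k$. The consistency of the primal-dual initialization prescribed by Assumption \ref{assumption:initialization} is exactly what makes the induction start; without it a transient term $\tfrac{1}{2\rho}(\boldsymbol{\alpha}^0 + \boldsymbol{\beta}^0)$ would persist through all iterations and the clean relation $\mathbf{z}^k = \tfrac{1}{2}\mathbf{E}_{\text{u}}\mathbf{x}^k$ would fail.
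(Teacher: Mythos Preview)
Your argument is correct and is precisely the standard derivation: solve the $\mathbf{z}$-subproblem in closed form using $\mathbf{B}^T\mathbf{B}=2\mathbf{I}_{mp}$ and $\mathbf{B}^T\mathbf{A}=-\mathbf{E}_{\text{u}}$, then propagate the symmetry $\boldsymbol{\alpha}^k=-\boldsymbol{\beta}^k$ through the dual update by induction. The paper itself does not give an in-line proof of this lemma; it explicitly states the result without proof and refers to \cite{dadmm} and \cite{track}, whose arguments are exactly the computation you carried out. One small remark: the paper's Assumption~\ref{assumption:initialization} literally writes $\mathbf{E}_{\text{o}}\mathbf{x}^0=2\mathbf{z}^0$, whereas your base case (and the lemma itself) require $\mathbf{z}^0=\tfrac{1}{2}\mathbf{E}_{\text{u}}\mathbf{x}^0$; this is almost certainly a typographical slip in the paper, and you have read it in the only way that is consistent with the statement being proved.
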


In this section we have established that the iterates of generalized distributed ADMM can be interpreted as an upper bound of the iterates of the method of multipliers. We have also introduced the required notions in order to establish its Q-linear convergence. In the following section we establish the Q-linear convergence of generalized distributed ADMM.
\section{Q-linear Convergence \label{section:convergence}}
This section is devoted to establishing linear convergence rate under the strong convexity of $\bar{f}.$ The section's main statement is provided in Theorem \ref{theorem:linear}. Before introducing the formal statement, we will formalize the conditions under which it holds and define some additional notation. 
\begin{assumption}[Strong Convexity]
The objective function $\bar{f}$ in \eqref{eq:original} is strongly convex with strong convexity constant $\mu_{\bar{f}}.$
\end{assumption}
\begin{assumption}[Lipschitz continuity of function components]
\label{assumption:lipschitz}
All functions $f_i$ have Lipschitz continuous gradients. Further, let $L_i$ denote a Lipschitz modulus of the gradient of $f_i,$ and  $L \triangleq \underset{i}{\text{max}} \,L_i.$ Consequently $L$ is used as the Lipschitz modulus of the gradient of $f.$
\end{assumption}
Let $\mathbf{u}^k \triangleq [\boldsymbol{\alpha}^k; \mathbf{x}^k]$ and $\mathbf{u}^{\star} \triangleq [\boldsymbol{\alpha}^{\star};\mathbf{x}^{\star}]$ denote the primal dual iterate at time $k$ generated by the iterates \eqref{eq:dadmm} and the primal dual optimal point of \eqref{eq:def_distributed} specified in Lemma \ref{lemma:column_space} . Further, let  \begin{equation}
\mathbf{H} \triangleq \begin{pmatrix}
(2/\rho \eta) \mathbf{I}_{mp} & \mathbf{0} \\
\mathbf{0} & \mathbf{M},
\end{pmatrix}
\end{equation}
and $\mathbf{M} \triangleq \frac{\rho}{2}(2  \mathbf{D} + \frac{2}{\rho} \mathbf{P} - \mathbf{E}_{\text{o}}^T\mathbf{E}_{\text{o}})$.

We are now ready to introduce Theorem \ref{theorem:linear}, which constitutes the main contribution of this work. Theorem \ref{theorem:linear} formally states the Q-linear convergence of the primal-dual iterates generated in \eqref{eq:dadmm}. It is worth noting that the statement is made in a semi-norm for any positive semi-definite  $\mathbf{P} \not \succ \mathbf{0}.$ Note that this is the case as well for $\mathbf{P}=\mathbf{0}$ corresponding to the standard ADMM. However, a stronger statement can be made for the $\mathbf{z}$-iterates in \eqref{eq:pure_ADMM}. Such statement is provided in Corollary \ref{corollary:linear}. 

The proof leverages the equivalence between the approximated method of multipliers and ADMM. The problem in \eqref{eq:equivalent_problem} is constructed such that when the method of multipliers is applied, the iterates are equivalent to the using generalized ADMM on the problem \eqref{eq:distributed}
if we select the step size to be $\frac{\rho}{2}$ and have a over-relaxation parameter $\eta$ in the dual update.
 This implies, that by analyzing the convergence of \eqref{eq:mmconvergence} with respect to the optimality conditions of \eqref{eq:equivalent_problem} instead of those of \eqref{eq:distributed}, we are using part of the penalty we would add to the Lagrangian when using the method of multipliers to start from a problem that is better conditioned.    
\begin{theorem}[Global Q-linear convergence] \label{theorem:linear}
Under Assumptions \ref{assumption:initialization}-\ref{assumption:lipschitz}, given that $\mathbf{P} \succeq \mathbf{0},$ $\rho > 0,$ and $\eta \in (0,1),$ the sequence of iterates $\{\mathbf{u}^k\}_{k \geq 0}$ generated by generalized ADMM \eqref{eq:dadmm}  fulfill
\begin{equation}
\|\mathbf{u}^{k+1} - \mathbf{u}^{\star}\|_{\mathbf{H}}^2 \leq \frac{1}{1+\delta}\|\mathbf{u}^{k} - \mathbf{u}^{\star}\|_{\mathbf{H}}^2,
\end{equation}
where $\delta$ is any strictly positive contraction parameter fulfilling
\begin{align}
&\delta \leq   
\underset{\tau > 0}{\text{max }}\text{min} \left\{\frac{ \rho \eta \tilde{\lambda}_{\text{min}}(\mathbf{E}_{\text{o}}^T\mathbf{E}_{\text{o}})}{2(1 + 1/\tau)\lambda_{\text{max}}(\mathbf{M})}, \right. \\
& \left.\frac{\rho \eta\mu_g \tilde{\lambda}_{\text{min}}(\mathbf{E}_{\text{o}}^T\mathbf{E}_{\text{o}})}{(1+\tau)L_g^2 + \rho \eta\lambda_{\text{max}}(\mathbf{M})\tilde{\lambda}_{\text{min}}(\mathbf{E}_{\text{o}}^T\mathbf{E}_{\text{o}})} \right\}
\end{align}
 where $\tilde{\lambda}_{\text{min}}(\mathbf{E}_{\text{o}}^T\mathbf{E}_{\text{o}})$ denotes the smallest non-zero eigenvalue of $\mathbf{E}_{\text{o}}^T\mathbf{E}_{\text{o}},$ $\lambda_{\text{max}}(\mathbf{M})$ denotes the largest eigenvalue of $\mathbf{M}$ and $L_g \triangleq L + (1-\eta)\frac{\rho}{2}\lambda_{\text{max}}(\mathbf{E}_{\text{o}}^T\mathbf{E}_{\text{o}})$ denotes a Lipschitz modulus of the function $g.$
\end{theorem}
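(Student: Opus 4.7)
The plan is to exploit the equivalence, established in Section \ref{section:algorithm}, between generalized D-ADMM and the approximated method of multipliers applied to the better conditioned problem \eqref{eq:equivalent_problem}, and to analyze convergence with respect to the optimality conditions \eqref{eq:equivalent_optimality} rather than those of \eqref{eq:distributed}. First I would write the first-order optimality condition for the primal subproblem \eqref{eq:mmxiterate}, then substitute the dual update to eliminate $\boldsymbol{\nu}^k$ in favour of $\boldsymbol{\nu}^{k+1}$, and finally re-group the $\frac{\rho}{2}\mathbf{E}_{\text{o}}^T\mathbf{E}_{\text{o}}\mathbf{x}^{k+1}$ term that appears to absorb it into $\nabla g(\mathbf{x}^{k+1})$. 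With the identification $\boldsymbol{\alpha}=\sqrt{\eta}\boldsymbol{\nu}$ and $\epsilon=1/\rho$ this yields the compact identity
\begin{equation}
\nabla g(\mathbf{x}^{k+1})-\nabla g(\mathbf{x}^{\star})+\mathbf{E}_{\text{o}}^T(\boldsymbol{\alpha}^{k+1}-\boldsymbol{\alpha}^{\star})=-\mathbf{M}(\mathbf{x}^{k+1}-\mathbf{x}^k),
\end{equation}
after subtracting the optimality condition \eqref{eq:opt_primal} in the equivalent form used in Lemma \ref{lemma:equivalence}.

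Next I would take the inner product of this identity with $\mathbf{x}^{k+1}-\mathbf{x}^{\star}$, apply restricted strong convexity of $g$ (Lemma \ref{lemma:restricted}) to lower-bound the first term by $\mu_g\|\mathbf{x}^{k+1}-\mathbf{x}^{\star}\|^2$, and use the dual recursion together with $\mathbf{E}_{\text{o}}\mathbf{x}^{\star}=\mathbf{0}$ to rewrite $\mathbf{E}_{\text{o}}(\mathbf{x}^{k+1}-\mathbf{x}^{\star})=\frac{2}{\rho\eta}(\boldsymbol{\alpha}^{k+1}-\boldsymbol{\alpha}^k)$. Expanding the resulting inner products via the polarization identity $2u^Tv=\|u\|^2+\|v\|^2-\|u-v\|^2$ (once in Euclidean norm for $\boldsymbol{\alpha}$ and once in $\mathbf{M}$ semi-norm for $\mathbf{x}$) produces the monotonicity-with-slack inequality
\begin{equation}
\|\mathbf{u}^k-\mathbf{u}^{\star}\|_{\mathbf{H}}^2\geq\|\mathbf{u}^{k+1}-\mathbf{u}^{\star}\|_{\mathbf{H}}^2+\|\mathbf{u}^{k+1}-\mathbf{u}^k\|_{\mathbf{H}}^2+2\mu_g\|\mathbf{x}^{k+1}-\mathbf{x}^{\star}\|^2.
\end{equation}
This is the standard ADMM non-expansion in the $\mathbf{H}$-metric, enhanced by a restricted-strong-convexity surplus that will drive Q-linear decrease.

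The main obstacle is the next step, where I must show that the two surplus terms on the right dominate $\delta\|\mathbf{u}^{k+1}-\mathbf{u}^{\star}\|_{\mathbf{H}}^2$; this is non-trivial because $\|\mathbf{u}^{k+1}-\mathbf{u}^{\star}\|_{\mathbf{H}}^2$ contains the dual component $\frac{2}{\rho\eta}\|\boldsymbol{\alpha}^{k+1}-\boldsymbol{\alpha}^{\star}\|^2$, which is not directly controlled by either $\|\mathbf{x}^{k+1}-\mathbf{x}^{\star}\|^2$ or $\|\mathbf{x}^{k+1}-\mathbf{x}^k\|_{\mathbf{M}}^2$. I would exploit Lemma \ref{lemma:multipliers}, which guarantees $\boldsymbol{\alpha}^{k+1}-\boldsymbol{\alpha}^{\star}\in\mathrm{col}(\mathbf{E}_{\text{o}})$ under Assumption \ref{assumption:initialization}, so that
\begin{equation}
\|\boldsymbol{\alpha}^{k+1}-\boldsymbol{\alpha}^{\star}\|^2\leq\tilde{\lambda}_{\text{min}}(\mathbf{E}_{\text{o}}^T\mathbf{E}_{\text{o}})^{-1}\|\mathbf{E}_{\text{o}}^T(\boldsymbol{\alpha}^{k+1}-\boldsymbol{\alpha}^{\star})\|^2.
\end{equation}
Substituting the optimality identity for $\mathbf{E}_{\text{o}}^T(\boldsymbol{\alpha}^{k+1}-\boldsymbol{\alpha}^{\star})$, then splitting with Young's inequality of free parameter $\tau>0$, and bounding $\|\nabla g(\mathbf{x}^{k+1})-\nabla g(\mathbf{x}^{\star})\|^2\leq L_g^2\|\mathbf{x}^{k+1}-\mathbf{x}^{\star}\|^2$ and $\|\mathbf{M}(\mathbf{x}^{k+1}-\mathbf{x}^k)\|^2\leq\lambda_{\text{max}}(\mathbf{M})\|\mathbf{x}^{k+1}-\mathbf{x}^k\|_{\mathbf{M}}^2$ gives a bound with two terms.

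Finally I would enforce the Q-linear inequality term by term. The coefficient of $\|\mathbf{x}^{k+1}-\mathbf{x}^k\|_{\mathbf{M}}^2$ on the right must fit inside the corresponding slack on the left, yielding the first branch of the $\min$ in the theorem's expression for $\delta$; the coefficient of $\|\mathbf{x}^{k+1}-\mathbf{x}^{\star}\|^2$, combined with the contribution from $\delta\|\mathbf{x}^{k+1}-\mathbf{x}^{\star}\|_{\mathbf{M}}^2\leq\delta\lambda_{\text{max}}(\mathbf{M})\|\mathbf{x}^{k+1}-\mathbf{x}^{\star}\|^2$, must fit inside the restricted strong convexity surplus $2\mu_g\|\mathbf{x}^{k+1}-\mathbf{x}^{\star}\|^2$, yielding the second branch. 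Taking the minimum and then maximizing over $\tau>0$ recovers the stated value of $\delta$, and rearranging the enhanced non-expansion gives the claimed contraction $\|\mathbf{u}^{k+1}-\mathbf{u}^{\star}\|_{\mathbf{H}}^2\leq\frac{1}{1+\delta}\|\mathbf{u}^k-\mathbf{u}^{\star}\|_{\mathbf{H}}^2$.
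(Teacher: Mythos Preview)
Your proposal is correct and follows essentially the same route as the paper's proof: derive the identity \eqref{eq:optimality_use} (which you write compactly as $\nabla g(\mathbf{x}^{k+1})-\nabla g(\mathbf{x}^{\star})+\mathbf{E}_{\text{o}}^T(\boldsymbol{\alpha}^{k+1}-\boldsymbol{\alpha}^{\star})=-\mathbf{M}(\mathbf{x}^{k+1}-\mathbf{x}^k)$), take the inner product with $\mathbf{x}^{k+1}-\mathbf{x}^{\star}$, apply restricted strong convexity and the polarization identity to obtain the $\mathbf{H}$-metric non-expansion with slack, and then control $\|\boldsymbol{\alpha}^{k+1}-\boldsymbol{\alpha}^{\star}\|^2$ via the column-space property, Young's inequality with parameter $\tau$, and the $L_g$-Lipschitz bound on $\nabla g$ to obtain the two branches of the $\min$. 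The only cosmetic difference is that the paper keeps $\nabla f$ and the $\frac{\rho(1-\eta)}{2}\mathbf{E}_{\text{o}}^T\mathbf{E}_{\text{o}}$ term separate rather than writing $\nabla g$, and carries the extra nonnegative term $\frac{\rho\eta}{2}\|\mathbf{E}_{\text{o}}(\mathbf{x}^{k+1}-\mathbf{x}^{\star})\|^2$ (from the $\boldsymbol{\alpha}$-part of $\|\mathbf{u}^{k+1}-\mathbf{u}^k\|_{\mathbf{H}}^2$) explicitly into the matrix condition before ultimately dropping it; neither affects the argument or the final bound.
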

\begin{proof}
This proof has many similarities with proofs found in \cite{dadmm}, \cite{track} and \cite{extra}. Further, since the upper bound of $\|\mathbf{E}_{\text{o}}\mathbf{x}\|^2$ can be interpreted as a quadratic approximation it is then also intuitively sound that the proof of convergence is similar to that of \cite{linearize}, where the entire objective function is approximated by a quadratic function at each iterate. We start the proof by leveraging the equivalence between the approximated method of multipliers and generalized D-ADMM.
The optimality condition for \eqref{eq:mmxiterate} can be expressed as
\begin{align}
\nabla f(\mathbf{x}^{k+1}) + \sqrt{\eta} \mathbf{E}_{\text{o}}^T\boldsymbol{\nu}^k + \frac{\rho}{2} \mathbf{E}_{\text{o}}^T\mathbf{E}_{\text{o}}\mathbf{x}^k \\ + \rho(\mathbf{D} + \epsilon \mathbf{P})(\mathbf{x}^{k+1}-\mathbf{x}^k) = \mathbf{0}, \nonumber
\end{align}
which can be equivalently written as
\begin{align}
\label{eq:not_convenient}
\nabla f(\mathbf{x}^{k+1}) + \frac{(1-\eta)\rho}{2} \mathbf{E}_{\text{o}}^T\mathbf{E}_{\text{o}}\mathbf{x}^{k+1} + \sqrt{\eta} \mathbf{E}_{\text{o}}^T\boldsymbol{\nu}^{k+1} = \\
+ \frac{\rho}{2}(2\mathbf{D} + 2\epsilon \mathbf{P} -\mathbf{E}_{\text{o}}^T\mathbf{E}_{\text{o}})(\mathbf{x}^k - \mathbf{x}^{k+1}) \nonumber
\end{align}
where we have used that $\boldsymbol{\nu}^{k} = \boldsymbol{\nu}^{k+1} - \frac{\sqrt{\eta} \rho}{2} \mathbf{E}_{\text{o}}\mathbf{x}^{k+1}$ and added and subtracted $\frac{\rho}{2} \mathbf{E}_{\text{o}}^T\mathbf{E}_{\text{o}}\mathbf{x}^{k+1}.$
Recall that one of the optimality conditions of \eqref{eq:equivalent_problem} is $\nabla f(\mathbf{x}^{\star}) + \frac{(1-\eta)\rho}{2} \mathbf{E}_{\text{o}}^T\mathbf{E}_{\text{o}}\mathbf{x}^{\star} + \sqrt{\eta}\mathbf{E}_{\text{o}}^T\boldsymbol{\nu}^{\star} = \mathbf{0}, $ which we subtract from \eqref{eq:not_convenient} yielding
\begin{align}
\label{eq:optimality_use}
\nabla f(\mathbf{x}^{k+1}) - \nabla f(\mathbf{x}^{\star}) + (1-\eta)\frac{\rho}{2} \mathbf{E}_{\text{o}}^T\mathbf{E}_{\text{o}} (\mathbf{x}^{k+1}- \mathbf{x}^{\star}) = \\  \mathbf{E}_{\text{o}}^T(\boldsymbol{\alpha}^{\star} - \boldsymbol{\alpha}^{k + 1}) + \frac{\rho}{2}(2\mathbf{D} + \frac{2}{\rho} \mathbf{P} -  \mathbf{E}_{\text{o}}^T\mathbf{E}_{\text{o}})(\mathbf{x}^{k} - \mathbf{x}^{k+1}), \nonumber
\end{align}
where we have also used that  $\sqrt{\eta}\boldsymbol{\nu}^{k} = \boldsymbol{\alpha}^k$ and $\sqrt{\eta}\boldsymbol{\nu}^{\star} = \boldsymbol{\alpha}^{\star}.$
By taking the inner product with $(\mathbf{x}^{k+1} - \mathbf{x}^{\star}),$ and using the restricted strong convexity of $g$ (cf. \eqref{eq:equivalent_problem} and Lemma \ref{lemma:restricted}) we have 
\begin{subequations}
\begin{align}
2\mu_g \|\mathbf{x}^{k+1} - \mathbf{x}^{\star}\|^2 & \leq 2(\mathbf{x}^{k+1}-\mathbf{x}^{\star})^T\mathbf{M}(\mathbf{x}^k - \mathbf{x}^{k+1}) + \\ \ &2 (\boldsymbol{\alpha}^{\star} - \boldsymbol{\alpha}^{k+1})\mathbf{E}_{\text{o}}(\mathbf{x}^{k+1} - \mathbf{x}^{\star}). \label{eq:dual_primal}
\end{align}
\end{subequations}

Since $\mathbf{x}^{\star}$ is consensual $\mathbf{E}_{\text{o}}\mathbf{x}^{\star} = \mathbf{0}.$ Further, $ \mathbf{E}_{\text{o}}\mathbf{x}^{k+1} = \frac{2}{\rho\eta}(\boldsymbol{\alpha}^{k+1} - \boldsymbol{\alpha}^k).$ Hence, \eqref{eq:dual_primal} can be equivalently written as
\begin{equation}
\frac{4}{\rho \eta}(\boldsymbol{\alpha}^{k+1} - \boldsymbol{\alpha}^{\star})^T(\boldsymbol{\alpha}^k - \boldsymbol{\alpha}^{k+1}).
\end{equation}
By using the equality $2(\mathbf{u}^{k+1}-\mathbf{u}^{\star})^T\mathbf{H}(\mathbf{u}^k - \mathbf{u}^{k+1}) = \|\mathbf{u}^k - \mathbf{u}^{\star}\|_{\mathbf{H}}^2 - \|\mathbf{u}^{k+1} - \mathbf{u}^{\star}\|_{\mathbf{H}}^2 - \|\mathbf{u}^{k+1} - \mathbf{u}^{k}\|_{\mathbf{H}}^2$ we have that
\begin{align}
2 \mu_g \|\mathbf{x}^{k+1} - \mathbf{x}^{\star}\|^2 + \|\mathbf{u}^{k+1} - \mathbf{u}^{\star}\|_{\mathbf{H}}^2 \\
+ \|\mathbf{u}^{k+1} - \mathbf{u}^k\|_{\mathbf{H}} \leq \|\mathbf{u}^k - \mathbf{u}^{\star}\|_{\mathbf{H}}^2. \nonumber
\end{align}

In order to establish Theorem \ref{theorem:linear} we need to show that
\begin{equation}
2 \mu_g \|\mathbf{x}^{k+1} - \mathbf{x}^{\star}\|^2 + \|\mathbf{u}^{k+1} - \mathbf{u}^k\|_{\mathbf{H}}^2 \geq \delta \|\mathbf{u}^{k+1} - \mathbf{u}^{\star}\|_{\mathbf{H}}^2,
\end{equation}
which is equivalent to showing that
\begin{align}
\label{eq:toshow}
2\mu_g \|\mathbf{x}^{k+1} - \mathbf{x}^{\star}\|^2 + \frac{\rho \eta}{2}\|\mathbf{E}_{\text{o}}(\mathbf{x}^{k+1} - \mathbf{x}^{\star})\|^2 + \|\mathbf{x}^{k+1} - \mathbf{x}^k\|_{\mathbf{M}}^2 \\ \qquad \qquad - \delta \|\mathbf{x}^{k+1} - \mathbf{x}^{\star}\|_{\mathbf{M}}^2 \geq \frac{2\delta}{\rho \eta}\|\boldsymbol{\alpha}^{k+1} - \boldsymbol{\alpha}^{\star}\|^2, \nonumber
\end{align}
where we have used the definition of $\mathbf{u}^k$ and used that $\frac{2}{\rho \eta}\|\boldsymbol{\alpha}^{k+1} - \boldsymbol{\alpha}^{k}\|^2 = \frac{\rho \eta}{2}\|\mathbf{E}_{\text{o}}\mathbf{x}^{k+1} - \mathbf{E}_{\text{o}}\mathbf{x}^{\star}\|^2.$ 
We will now find an upper bound to the RHS of \eqref{eq:toshow} and find the conditions under which this upper bound is bounded above by the LHS of \eqref{eq:toshow}.
For this we rewrite equation \eqref{eq:optimality_use} and take the norm squared on both sides as
\begin{align}
\|\mathbf{E}_{\text{o}}^T(\boldsymbol{\alpha}^{k+1} - \boldsymbol{\alpha}^{\star})\|^2 = \|\nabla f(\mathbf{x}^{k+1}) - \nabla f(\mathbf{x}^{\star}) \\
+ \frac{\rho (1-\eta)}{2} \mathbf{E}_{\text{o}}^T\mathbf{E}_{\text{o}}(\mathbf{x}^{k+1} - \mathbf{x}^{\star}) + \mathbf{M}(\mathbf{x}^{k+1} - \mathbf{x}^k)\|^2,
\end{align}
which can be bounded using the Peter-Paul inequality (Young's inequality with exponent 2) and the fact that $g$ has a $L_g-$ Lipschitz continuous gradient with $L_g = L + (1-\eta)\frac{\rho}{2} \lambda_{\text{max}}(\mathbf{E}_{\text{o}}^T\mathbf{E}_{\text{o}}), $ where $\lambda_{\text{max}}(\mathbf{E}_{\text{o}}^T\mathbf{E}_{\text{o}})$ denotes $\mathbf{E}_{\text{o}}^T\mathbf{E}_{\text{o}}$'s largest eigenvalue. Hence, for any $\tau>0$ it holds that
\begin{align}
\label{eq:matrixproblem}
\|\mathbf{E}_{\text{o}}^T(\boldsymbol{\alpha}^{k+1} - \boldsymbol{\alpha}^{\star})\|^2 \leq (1+\tau)L_g^2\|\mathbf{x}^{k+1} - \mathbf{x}^{\star}\|^2 + \\
\left(1 + \frac{1}{\tau} \right)\|\mathbf{M}(\mathbf{x}^k - \mathbf{x}^{k+1})\|^2.
\end{align}

Further since $\boldsymbol{\alpha}^{k+1}$ and $\boldsymbol{\alpha}^{\star}$ lie in the column space of $\mathbf{E}_{\text{o}}$ (cf. Lemmas \ref{lemma:column_space} and \ref{lemma:multipliers})
we have that $\|\mathbf{E}_{\text{o}}^T(\boldsymbol{\alpha}^{k+1} - \boldsymbol{\alpha}^{\star})\|^2 \geq \tilde{\lambda}_{\text{min}}(\mathbf{E}_{\text{o}}^T\mathbf{E}_{\text{o}})\|\boldsymbol{\alpha}^{k+1} - \boldsymbol{\alpha}^{\star}\|^2,$ where $\tilde{\lambda}_{\text{min}}(\mathbf{E}_{\text{o}}^T\mathbf{E}_{\text{o}})$ denotes the smallest non-zero eigenvalue of $\mathbf{E}_{\text{o}}^T\mathbf{E}_{\text{o}}.$ 

Hence, $\|\boldsymbol{\alpha}^{k+1} - \boldsymbol{\alpha}^{\star}\|^2$ can be upper bounded as
\begin{align}
\label{eq:upper_multipliers}
\|\boldsymbol{\alpha}^{k+1} - \boldsymbol{\alpha}^{\star}\|^2 \leq  &\frac{1+\tau}{\tilde{\lambda}_{\text{min}}(\mathbf{E}_{\text{o}}^T\mathbf{E}_{\text{o}})}L_g^2\|\mathbf{x}^{k+1} - \mathbf{x}^{\star}\|^2 +  \\
& \frac{1+1/\tau}{\tilde{\lambda}_{\text{min}}(\mathbf{E}_{\text{o}}^T\mathbf{E}_{\text{o}})}\|\mathbf{M}(\mathbf{x}^{k+1} - \mathbf{x}^k)\|^2
\end{align}
Let $\mathbf{S} \triangleq 2\mu_g \mathbf{I}_{np} + \frac{\rho \eta}{2} \mathbf{E}_{\text{o}}^T\mathbf{E}_{\text{o}}.$  For the RHS of \eqref{eq:upper_multipliers} to be a lower bound of the LHS of \eqref{eq:toshow} we require that
\begin{align}
\label{eq:condition1}
\|\mathbf{x}^{k+1} - \mathbf{x}^{\star}\|^2_{\mathbf{S}} \geq \frac{2\delta(1+\epsilon)}{\rho \eta \tilde{\lambda}_{\text{min}}(\mathbf{E}_{\text{o}}^T\mathbf{E}_{\text{o}})}L_g^2\|\mathbf{x}^{k+1} - \mathbf{x}^{\star}\| + \\
\qquad \qquad 2\delta\|\mathbf{x}^{k+1} - \mathbf{x}^{\star}\|^2_{\mathbf{M}} \nonumber \\
\|\mathbf{x}^{k+1} - \mathbf{x}^k\|^2_{\mathbf{M}} \geq \frac{2\delta(1+1/\epsilon)}{\rho \eta \tilde{\lambda}_{\text{min}}(\mathbf{E}_{\text{o}}^T\mathbf{E}_{\text{o}})}\|\mathbf{M}(\mathbf{x}^{k+1} - \mathbf{x}^k)\|^2. \label{eq:condition2}
\end{align}
Conditions \eqref{eq:condition1} and \eqref{eq:condition2} will hold true if
\begin{align}
2\mu_g \mathbf{I}_{np} + \frac{\rho \eta}{2}\mathbf{E}_{\text{o}}^T\mathbf{E}_{\text{o}} \succeq \frac{2\delta (1 + \tau)}{\rho \eta \tilde{\lambda}_{\text{min}}(\mathbf{E}_{\text{o}}^T\mathbf{E}_{\text{o}})}L_g^2\mathbf{I}_{np} + 2\delta\mathbf{M} \\
\mathbf{I}_{np} \succeq \frac{2\delta (1 + 1/\tau)}{\rho \eta \tilde{\lambda}_{\text{min}}(\mathbf{E}_{\text{o}}^T\mathbf{E}_{\text{o}})}\mathbf{M},
\end{align}
which are always fulfilled given that $\delta > 0$ is chosen sufficiently small. One can further upper-bound $\delta$ as
\begin{align}
\label{eq:cbextra}
&\delta \leq   
\underset{\tau > 0}{\text{max }}\text{min} \left\{\frac{ \rho \eta \tilde{\lambda}_{\text{min}}(\mathbf{E}_{\text{o}}^T\mathbf{E}_{\text{o}})}{2(1 + 1/\tau)\lambda_{\text{max}}(\mathbf{M})}, \right. \\
& \left.\frac{\rho \eta\mu_g \tilde{\lambda}_{\text{min}}(\mathbf{E}_{\text{o}}^T\mathbf{E}_{\text{o}})}{(1+\tau)L_g^2 + \rho \eta\lambda_{\text{max}}(\mathbf{M})\tilde{\lambda}_{\text{min}}(\mathbf{E}_{\text{o}}^T\mathbf{E}_{\text{o}})} \right\}. \nonumber
\end{align}
\end{proof}

Note that when $\mathbf{P} = \mathbf{0}$ \eqref{eq:optimality_use} can be instead written as
\begin{align}
\nabla f(\mathbf{x}^{k+1}) - \nabla f(\mathbf{x}^{\star}) + (1-\eta)\frac{\rho}{2}\mathbf{E}_{\text{o}}^T\mathbf{E}_{\text{o}}(\mathbf{x}^{k+1} - \mathbf{x}^{\star}) =\\ \mathbf{E}_{\text{o}}^T(\boldsymbol{\alpha}^{\star} - \boldsymbol{\alpha}^{k+1}) + \frac{\rho}{2}\mathbf{E}_{\text{u}}^T\mathbf{E}_{\text{u}}(\mathbf{z}^{k}-\mathbf{z}^{k+1}) \nonumber,
\end{align}
where we have used Lemma \ref{lemma:byez}. Then following the same procedure as in \cite{track} but using the optimality conditions of \eqref{eq:equivalent_problem} instead of \eqref{eq:def_distributed}  we obtain the following Corollary.
\begin{corollary}[\label{corollary:linear} Q-linear Convergence, $\mathbf{P} =\mathbf{0}$ ]
Let $\mathbf{v}^k \triangleq [\boldsymbol{\alpha}^k;\mathbf{z}^k]$ and $\mathbf{v}^{\star} \triangleq [\boldsymbol{\alpha}^{\star};\mathbf{z}^{\star}],$ where $\mathbf{z}^k = \frac{1}{2}\mathbf{E}_{\text{u}}^T\mathbf{E}_{\text{u}}\mathbf{x}^k$ for  $k \geq 0$ and $\mathbf{z}^{\star} = \frac{1}{2}\mathbf{E}_{\text{u}}^T\mathbf{E}_{\text{u}}\mathbf{x}^{\star},$ then for $\mathbf{P} = \mathbf{0},$ $\rho > 0 ,\eta \in (0,1)$ generalized D-ADMM generates iterates that fulfill
\begin{equation}
\|\mathbf{v}^k - \mathbf{v}^{\star}\|^2_{\mathbf{G}} \leq \frac{1}{1+\delta_{\text{ADMM}}}\|\mathbf{v}^{k-1} - \mathbf{v}^{\star}\|^2_{\mathbf{G}},
\end{equation}  
where 
\begin{equation}
\mathbf{G} \triangleq \begin{pmatrix}
\frac{1}{\rho \eta}\mathbf{I}_{mp} & \mathbf{0} \\
\mathbf{0} & \rho \mathbf{I}_{np}
\end{pmatrix}
\end{equation} and $\delta_{\text{ADMM}}$ is a strictly positive contraction parameter fulfilling
\begin{align}
\delta_{\text{ADMM}} \leq \underset{\tau > 0}{\text{max }}\text{min} \left\{ \frac{\eta\tilde{\lambda}_{\text{min}}(\mathbf{E}_{\text{o}}^T\mathbf{E}_{\text{o}})}{(1+1/\tau)\lambda_{\text{max}}(\mathbf{E}_{\text{u}}^T\mathbf{E}_{u})}, \right. \\
\left. \frac{2\rho\eta \mu_g \tilde{\lambda}_{\text{min}}(\mathbf{E}_{\text{o}}^T\mathbf{E}_{\text{o}})}{\rho^2 \eta \lambda_{\text{max}}(\mathbf{E}_{\text{u}}^T\mathbf{E}_{\text{u}})\tilde{\lambda}_{\text{min}}(\mathbf{E}_{\text{o}}^T\mathbf{E}_{\text{o}}) + (1+\tau)L_g^2} \right\}.
\end{align}
\end{corollary}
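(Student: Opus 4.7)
The plan is to mirror the proof of Theorem \ref{theorem:linear}, but with $\mathbf{P}=\mathbf{0}$ and using Lemma \ref{lemma:byez} to translate the quadratic proximal term from $\mathbf{x}$-space to $\mathbf{z}$-space, thereby obtaining a contraction in the pair $\mathbf{v}^k=[\boldsymbol{\alpha}^k;\mathbf{z}^k]$. The starting point is the rewritten optimality condition highlighted just before the corollary, namely
\begin{align*}
\nabla f(\mathbf{x}^{k+1}) - \nabla f(\mathbf{x}^{\star}) + (1-\eta)\tfrac{\rho}{2}\mathbf{E}_{\text{o}}^{T}\mathbf{E}_{\text{o}}(\mathbf{x}^{k+1}-\mathbf{x}^{\star})
= \mathbf{E}_{\text{o}}^{T}(\boldsymbol{\alpha}^{\star}-\boldsymbol{\alpha}^{k+1}) + \tfrac{\rho}{2}\mathbf{E}_{\text{u}}^{T}\mathbf{E}_{\text{u}}(\mathbf{x}^{k}-\mathbf{x}^{k+1}),
\end{align*}
which, by Lemma \ref{lemma:byez} (so that $\mathbf{E}_{\text{u}}^{T}\mathbf{E}_{\text{u}}\mathbf{x}^{k}=2\mathbf{E}_{\text{u}}^{T}\mathbf{z}^{k}$), already rewrites the ``residual'' term on the right-hand side in $\mathbf{z}$.

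First I would take the inner product of this identity with $\mathbf{x}^{k+1}-\mathbf{x}^{\star}$ and invoke the restricted strong convexity of $g$ from Lemma \ref{lemma:restricted} on the left, exactly as in Theorem \ref{theorem:linear}. On the right, I would use $\mathbf{E}_{\text{o}}\mathbf{x}^{\star}=\mathbf{0}$ together with the dual recursion $\mathbf{E}_{\text{o}}\mathbf{x}^{k+1}=\tfrac{2}{\rho\eta}(\boldsymbol{\alpha}^{k+1}-\boldsymbol{\alpha}^{k})$ to turn the $\boldsymbol{\alpha}$-term into an inner product of $\boldsymbol{\alpha}^{k+1}-\boldsymbol{\alpha}^{\star}$ with $\boldsymbol{\alpha}^{k}-\boldsymbol{\alpha}^{k+1}$, and use Lemma \ref{lemma:byez} a second time on the $\mathbf{x}$-residual to convert it into an inner product of $\mathbf{z}^{k+1}-\mathbf{z}^{\star}$ with $\mathbf{z}^{k}-\mathbf{z}^{k+1}$ weighted by $\rho\mathbf{I}$. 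The two inner products combine into $2(\mathbf{v}^{k+1}-\mathbf{v}^{\star})^{T}\mathbf{G}(\mathbf{v}^{k}-\mathbf{v}^{k+1})$ for the $\mathbf{G}$ defined in the statement, and the three-term expansion
\begin{equation*}
2(\mathbf{v}^{k+1}-\mathbf{v}^{\star})^{T}\mathbf{G}(\mathbf{v}^{k}-\mathbf{v}^{k+1}) = \|\mathbf{v}^{k}-\mathbf{v}^{\star}\|_{\mathbf{G}}^{2} - \|\mathbf{v}^{k+1}-\mathbf{v}^{\star}\|_{\mathbf{G}}^{2} - \|\mathbf{v}^{k+1}-\mathbf{v}^{k}\|_{\mathbf{G}}^{2}
\end{equation*}
produces the basic descent-plus-dissipation inequality
\begin{equation*}
2\mu_{g}\|\mathbf{x}^{k+1}-\mathbf{x}^{\star}\|^{2} + \|\mathbf{v}^{k+1}-\mathbf{v}^{\star}\|_{\mathbf{G}}^{2} + \|\mathbf{v}^{k+1}-\mathbf{v}^{k}\|_{\mathbf{G}}^{2} \le \|\mathbf{v}^{k}-\mathbf{v}^{\star}\|_{\mathbf{G}}^{2}.
\end{equation*}

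Next I would establish Q-linear convergence by showing that the left-hand ``surplus'' $2\mu_{g}\|\mathbf{x}^{k+1}-\mathbf{x}^{\star}\|^{2}+\|\mathbf{v}^{k+1}-\mathbf{v}^{k}\|_{\mathbf{G}}^{2}$ dominates $\delta_{\mathrm{ADMM}}\|\mathbf{v}^{k+1}-\mathbf{v}^{\star}\|_{\mathbf{G}}^{2}$. Following Theorem \ref{theorem:linear}, I take norms on both sides of the optimality identity, apply Young's inequality with parameter $\tau>0$ and the $L_g$-Lipschitz bound on $\nabla g$, and use that $\boldsymbol{\alpha}^{k+1}-\boldsymbol{\alpha}^{\star}$ lies in the column space of $\mathbf{E}_{\text{o}}$ (Lemmas \ref{lemma:column_space} and \ref{lemma:multipliers}) to extract $\tilde{\lambda}_{\min}(\mathbf{E}_{\text{o}}^{T}\mathbf{E}_{\text{o}})$. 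The difference from Theorem \ref{theorem:linear} is that the residual term is now $\tfrac{\rho}{2}\mathbf{E}_{\text{u}}^{T}\mathbf{E}_{\text{u}}(\mathbf{x}^{k}-\mathbf{x}^{k+1})=\rho\mathbf{E}_{\text{u}}^{T}(\mathbf{z}^{k}-\mathbf{z}^{k+1})$, whose norm is controlled by $\rho^{2}\lambda_{\max}(\mathbf{E}_{\text{u}}^{T}\mathbf{E}_{\text{u}})\|\mathbf{z}^{k+1}-\mathbf{z}^{k}\|^{2}$, i.e.\ by $\rho\lambda_{\max}(\mathbf{E}_{\text{u}}^{T}\mathbf{E}_{\text{u}})\|\mathbf{v}^{k+1}-\mathbf{v}^{k}\|_{\mathbf{G}}^{2}$ on the $\mathbf{z}$-block of $\mathbf{G}$.

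Matching the two resulting semidefinite inequalities — one in front of $\|\mathbf{x}^{k+1}-\mathbf{x}^{\star}\|^{2}$ and one in front of $\|\mathbf{v}^{k+1}-\mathbf{v}^{k}\|_{\mathbf{G}}^{2}$ — yields the requirement that $\delta_{\mathrm{ADMM}}$ be no larger than the two expressions inside the $\min$ in the statement, and the $\max$ over $\tau>0$ gives the sharpest such bound. I expect the only mildly delicate step to be the bookkeeping in going from $\mathbf{x}$-increments to $\mathbf{z}$-increments: one must be careful that the factor of two coming from $\mathbf{E}_{\text{u}}^{T}\mathbf{E}_{\text{u}}\mathbf{x}=2\mathbf{E}_{\text{u}}^{T}\mathbf{z}$ (Lemma \ref{lemma:byez}) is absorbed consistently both in the cross term that produces the $\mathbf{G}$-weighted telescoping and in the Young-inequality bound on $\|\boldsymbol{\alpha}^{k+1}-\boldsymbol{\alpha}^{\star}\|^{2}$, so that the denominator $\lambda_{\max}(\mathbf{E}_{\text{u}}^{T}\mathbf{E}_{\text{u}})$ appears with exactly the coefficients stated in the corollary.
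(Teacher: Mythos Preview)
Your proposal is correct and follows essentially the same approach the paper indicates: start from the rewritten optimality identity with $\mathbf{P}=\mathbf{0}$, use Lemma~\ref{lemma:byez} to pass from $\mathbf{x}$-increments to $\mathbf{z}$-increments, and then replay the proof of Theorem~\ref{theorem:linear} (equivalently, the procedure in \cite{track}) using the optimality conditions of \eqref{eq:equivalent_problem} so that the restricted strong convexity of $g$ supplies the $\mu_g$ term. Your flagged bookkeeping issue (the factors of $2$ from $\mathbf{E}_{\text{u}}\mathbf{x}^k=2\mathbf{z}^k$ entering both the $\mathbf{G}$-telescoping and the Young-inequality bound) is exactly the right place to be careful, and once tracked consistently it produces the $\lambda_{\max}(\mathbf{E}_{\text{u}}^{T}\mathbf{E}_{\text{u}})$ factors stated in the corollary.
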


\section{Equivalence to P-EXTRA \label{section:p-extra}}
In both \cite{interpret} and \cite{pridu} it is established that EXTRA is a saddle point method. More specifically, EXTRA can be shown to perform a gradient descent step in the primal variables and a gradient ascent step in the dual variables of an augmented Lagrangian function. Further, since P-EXTRA is a proximal version of EXTRA it can be analogously interpreted as a saddle point method. It is then unsurprising that, if we interpret generalized D-ADMM as an approximation of the method of multipliers, generalized D-ADMM can be proven to be equivalent to P-EXTRA. 
This section is hence devoted to establishing the equivalence between generalized D-ADMM and P-EXTRA. For this we will introduce P-EXTRA and its requirements for convergence. P-EXTRA's iterates can be written as
\begin{equation}
\label{eq:pextra}
\mathbf{x}^{k+1} := (\mathbf{W} \otimes \mathbf{I}_p)\mathbf{x}^{k} - \xi \nabla f(\mathbf{x}^{k+1}) + \sum_{t=0}^{k}(\mathbf{W} - \tilde{\mathbf{W}})\otimes \mathbf{I}_p \mathbf{x}^t,
\end{equation}
where $\xi > 0$ denotes the step-size and $\mathbf{W}$ and $\tilde{\mathbf{W}}$ are mixing matrices.
The requirements on the matrices are included in the following Assumption.
\begin{assumption}[Requirements on $\mathbf{W}$ and $\tilde{\mathbf{W}},$ \cite{extra}].\label{Assumption:pextra} Consider a connected graph $\mathcal{G}.$ The mixing matrices $\mathbf{W} \in \mathbb{R}^{n \times n}$ and $\tilde{\mathbf{W}} \in \mathbb{R}^{n \times n}$ satisfy:
\begin{enumerate}[label=(\subscript{A}{{\arabic*}})]
\item (Decentralized property) If $i \neq j$ and $(i,j) \not \in \mathcal{A},$ then $\tilde{w}_{ij} = \tilde{w}_{ij} = 0.$
\item (Symmetry) $\mathbf{W} = \mathbf{W}^T,$ $\tilde{\mathbf{W}} = \tilde{\mathbf{W}}^T.$
\item (Null space property) $\text{null}\{\mathbf{W} - \tilde{\mathbf{W}}\} = \text{span}\{\mathbf{1}_n\},$ $\text{null}\{\mathbf{I}_n - \tilde{\mathbf{W}}\} \supseteq \text{span}\{\mathbf{1}_n\}.$
\item (Spectral Property) \label{ass:spectral}$\tilde{\mathbf{W}} \succ \mathbf{0}$ and $\frac{\mathbf{I}_n + \mathbf{W}}{2} \succeq \tilde{\mathbf{W}} \succeq \mathbf{W}.$
\end{enumerate}
\end{assumption}
Now that we have introduced the notation for P-EXTRA we formalize this section's claim in Theorem \ref{theorem:pextra}. The proof of the theorem relies on rewriting the optimality condition of the iterate in $\mathbf{x}$ in \eqref{eq:mmcompare}, \eqref{eq:not_convenient}, and using the fact that $\boldsymbol{\alpha}^k = \frac{\rho \eta}{2}\sum_{t=0}^k \mathbf{E}_{\text{o}}\mathbf{x}^t.$ 
\begin{theorem}[Generalized D-ADMM and P-EXTRA] \label{theorem:pextra}Given that
$\mathbf{P} = \boldsymbol{\Pi} \otimes \mathbf{I}_p,$ is selected such that
\begin{equation}
\boldsymbol{\Pi} = \frac{1}{\xi}(\mathbf{I}_n - \xi \rho \mathbf{D}_{\mathcal{G}})
\end{equation}
which can always be done, the iterates in \eqref{eq:admm_compare} can be expressed as 
\begin{equation}
\label{eq:res_t1}
\mathbf{x}^{k+1} = (\mathbf{W} \otimes \mathbf{I}_p) \mathbf{x}^k - \xi \nabla f(\mathbf{x}^{k+1}) + \sum_{t=0}^{k}(\mathbf{W} - \tilde{\mathbf{W}})\otimes \mathbf{I}_p \mathbf{x}^{t},
\end{equation}
with mixing matrices
\begin{subequations}
\label{eq:mixing}
\begin{align}
\mathbf{W} = \mathbf{I}_n -  \frac{\xi \rho}{2} \mathbf{L}_{\mathcal{G}} \\
\tilde{\mathbf{W}} = \mathbf{I}_n - \frac{\xi \rho}{2}(1-\eta) \mathbf{L}_{\mathcal{G}}.
\end{align}
\end{subequations}
implying that Generalized ADMM and P-EXTRA are equivalent for the given choice of matrices $\mathbf{W},\tilde{\mathbf{W}}$ and $\mathbf{P}.$
\end{theorem}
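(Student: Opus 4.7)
My strategy is a direct algebraic identification: I will manipulate the primal update of generalized D-ADMM in \eqref{eq:admm_compare} until it takes the P-EXTRA form \eqref{eq:pextra}, and then read off $\mathbf{W}$ and $\tilde{\mathbf{W}}$ by matching coefficients. Three ingredients will make the match work: the Kronecker factorization $\mathbf{E}_{\text{o}}^T\mathbf{E}_{\text{o}} = \mathbf{L}_{\mathcal{G}}\otimes\mathbf{I}_p$, the identity $\mathbf{E}_{\text{u}}^T\mathbf{E}_{\text{u}} = 2\mathbf{D}-\mathbf{E}_{\text{o}}^T\mathbf{E}_{\text{o}}$ (immediate from the definition of $\mathbf{D}$), and the dual unrolling $\boldsymbol{\alpha}^k = \frac{\rho\eta}{2}\mathbf{E}_{\text{o}}\sum_{t=0}^{k}\mathbf{x}^t$, which I will obtain by iterating the dual update in \eqref{eq:admm_compare} under the initialization $\boldsymbol{\alpha}^0 = \frac{\rho\eta}{2}\mathbf{E}_{\text{o}}\mathbf{x}^0$ (compatible with Assumption \ref{assumption:initialization} since the right-hand side lies in the column space of $\mathbf{E}_{\text{o}}$).

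\textbf{Execution.} First, I will write the first-order optimality condition for the $\mathbf{x}$-subproblem in \eqref{eq:admm_compare}, producing a linear equation in $\mathbf{x}^{k+1}$ with a nonlinear term $\nabla f(\mathbf{x}^{k+1})$. Next, I will substitute the identity for $\mathbf{E}_{\text{u}}^T\mathbf{E}_{\text{u}}$ and the unrolled form of $\boldsymbol{\alpha}^k$, and collect the $\mathbf{x}^{k+1}$ terms on the left to obtain an expression of the form $(\rho\mathbf{D}+\mathbf{P})\mathbf{x}^{k+1} = -\nabla f(\mathbf{x}^{k+1}) + \mathbf{C}_1\mathbf{x}^k + \mathbf{C}_2\sum_{t=0}^{k}\mathbf{x}^t$, for explicit matrices $\mathbf{C}_1,\mathbf{C}_2$ involving $\mathbf{D}$, $\mathbf{E}_{\text{o}}^T\mathbf{E}_{\text{o}}$, and $\mathbf{P}$. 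Plugging in the prescribed $\mathbf{P} = (\tfrac{1}{\xi}\mathbf{I}_n - \rho\mathbf{D}_{\mathcal{G}})\otimes\mathbf{I}_p$ collapses $\rho\mathbf{D}+\mathbf{P}$ to $\tfrac{1}{\xi}\mathbf{I}_{np}$, and the $\rho\mathbf{D}$ contribution inside $\mathbf{C}_1$ cancels against the $\mathbf{P}\mathbf{x}^k$ term. After multiplying through by $\xi$, the coefficient of $\mathbf{x}^k$ reduces to $(\mathbf{I}_n - \tfrac{\xi\rho}{2}\mathbf{L}_{\mathcal{G}})\otimes\mathbf{I}_p$ and the coefficient of $\sum_{t=0}^{k}\mathbf{x}^t$ to $-\tfrac{\xi\rho\eta}{2}\mathbf{L}_{\mathcal{G}}\otimes\mathbf{I}_p$. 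Reading these off gives $\mathbf{W} = \mathbf{I}_n - \tfrac{\xi\rho}{2}\mathbf{L}_{\mathcal{G}}$ and, from $\mathbf{W}-\tilde{\mathbf{W}} = -\tfrac{\xi\rho\eta}{2}\mathbf{L}_{\mathcal{G}}$, $\tilde{\mathbf{W}} = \mathbf{I}_n - \tfrac{\xi\rho}{2}(1-\eta)\mathbf{L}_{\mathcal{G}}$, as claimed in \eqref{eq:mixing}.

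\textbf{Hard part.} Each step is a routine algebraic manipulation; the real pitfalls are bookkeeping. The unrolling of $\boldsymbol{\alpha}^k$ requires absorbing the $t=0$ term $\tfrac{\rho\eta}{2}\mathbf{E}_{\text{o}}\mathbf{x}^0$ into $\boldsymbol{\alpha}^0$, and any off-by-one in the summation would shift $\tilde{\mathbf{W}}$ by an additive $\tfrac{\xi\rho\eta}{2}\mathbf{L}_{\mathcal{G}}$ and break the equivalence. A secondary subtlety is that the clause ``which can always be done'' hides a constraint on $\xi$: the choice of $\mathbf{P}$ satisfies $\mathbf{P}\succeq\mathbf{0}$ only when $\xi\le 1/(\rho d_{\max})$, and only for such $\xi$ do the derived $\mathbf{W}$ and $\tilde{\mathbf{W}}$ inherit the sparsity pattern, symmetry, null-space structure, and spectral ordering of the Laplacian that Assumption \ref{Assumption:pextra} demands; I will flag this explicitly rather than leave it implicit.
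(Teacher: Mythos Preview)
Your proposal is correct and essentially coincides with the paper's own proof: both write the first-order optimality condition of the $\mathbf{x}$-subproblem, unroll the dual update as $\boldsymbol{\alpha}^k=\frac{\rho\eta}{2}\sum_{t=0}^{k}\mathbf{E}_{\text{o}}\mathbf{x}^t$, substitute the prescribed $\mathbf{P}$ so that $\rho\mathbf{D}+\mathbf{P}=\tfrac{1}{\xi}\mathbf{I}_{np}$, and then read off $\mathbf{W}$ and $\tilde{\mathbf{W}}$ from the coefficients of $\mathbf{x}^k$ and $\sum_{t}\mathbf{x}^t$ using $\mathbf{E}_{\text{o}}^T\mathbf{E}_{\text{o}}=\mathbf{L}_{\mathcal{G}}\otimes\mathbf{I}_p$. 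The only cosmetic difference is that the paper starts from the already-derived optimality relation \eqref{eq:not_convenient} (coming from the method-of-multipliers viewpoint) rather than differentiating \eqref{eq:admm_compare} directly, and your explicit remarks on the initialization $\boldsymbol{\alpha}^0=\frac{\rho\eta}{2}\mathbf{E}_{\text{o}}\mathbf{x}^0$ and on the constraint $\xi\le 1/(\rho d_{\max})$ needed for $\mathbf{P}\succeq\mathbf{0}$ are useful clarifications that the paper leaves implicit.
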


\begin{proof}
Let us start by re-formulating \eqref{eq:not_convenient} by using that $\sqrt{\eta}\boldsymbol{\nu}^k = \boldsymbol{\alpha}^k$ as
\begin{equation}
\label{eq:admm_pextra}
\nabla f(\mathbf{x}^{k+1}) + \mathbf{E}_{\text{o}}^T\boldsymbol{\alpha}^k + \frac{\rho}{2}\mathbf{E}_{\text{o}}^T\mathbf{E}_{\text{o}}\mathbf{x}^k + \frac{\rho}{2}(2\mathbf{D} + \frac{2}{\rho} \mathbf{P})(\mathbf{x}^{k+1} - \mathbf{x}^k) = \mathbf{0},
\end{equation}
where $\epsilon$ has been chosen to be $\frac{1}{\rho}.$ The expression above can be equivalently written as 
\begin{equation}
\mathbf{x}^{k+1} = \mathbf{W}_1 \mathbf{x}^{k} - \frac{2}{\rho}(2\mathbf{D}+\frac{2}{\rho}\mathbf{P})^{-1}(\nabla f(\mathbf{x}^{k+1})+ \mathbf{E}_{\text{o}}^T\boldsymbol{\alpha}^{k}),
\end{equation}
where $\mathbf{W}_1 \triangleq \mathbf{I}_{np} - (2\mathbf{D} + \frac{2}{\rho}\mathbf{P})^{-1}\mathbf{E}_{\text{o}}^T\mathbf{E}_{\text{o}}.$
By setting $\mathbf{P} = \frac{1}{2\xi}(\mathbf{I}_{np} - \xi \rho \mathbf{D}) = \boldsymbol{\Pi} \otimes \mathbf{I}_p$ and using that $\boldsymbol{\alpha}^k = \sum_{t=0}^k (\frac{\rho \eta}{2})\mathbf{E}_{\text{o}}\mathbf{x}^t$ we have that
\begin{equation}
\mathbf{x}^{k+1} = (\mathbf{I} - \frac{\xi \rho}{2}\mathbf{E}_{\text{o}}^T\mathbf{E}_{\text{o}})\mathbf{x}^k - \xi \nabla f(\mathbf{x}^{k+1}) - \sum_{t=0}^k (\frac{\rho \eta \xi}{2})\mathbf{E}_{\text{o}}^T\mathbf{E}_{\text{o}}\mathbf{x}^t.
\end{equation}
By recalling that $\mathbf{E}_{\text{o}}^T\mathbf{E}_{\text{o}} = \mathbf{L}_{\mathcal{G}} \otimes \mathbf{I}_p,$ we can re-write the iterate as in \eqref{eq:res_t1}. 
\end{proof}

The equivalence established by Theorem \ref{theorem:pextra} allows to establish the convergence of P-EXTRA for cases in which convergence is not guaranteed by the analysis in \cite{pextra}. This is done by applying Theorem \ref{theorem:pextra} and using the results in \cite{global_linear}. Further, the equivalence also showcases that overshooting with generalized ADMM, i.e. selecting $\eta > 1$ corresponds to overshooting with P-EXTRA i.e. selecting $\tilde{\mathbf{W}} \succ \frac{\mathbf{I}_n + \mathbf{W}}{2}$ and therefore guaranteeing the convergence of P-EXTRA when overshooting with the set of matrices provided in Theorem \ref{theorem:pextra}. Finally, using Theorem \ref{theorem:pextra} and the equivalence of generalized ADMM with the inexact method of multipliers we can analyze P-EXTRA under a different optic. Doing so we will now establish the conditions over $\rho $ $\xi$ and $\eta$ for which P-EXTRA is guaranteed to converge according to \cite{pextra} and the conditions under which generalized ADMM is guaranteed to converge according to \cite{global_linear}. 

We start establishing the parameters for guaranteed converge in the case of P-EXTRA. The parameters $\xi, \rho$ and $\eta$ must be chosen such that all the conditions in Assumption \ref{Assumption:pextra} hold. Conditions $(A_1)$-$(A_3)$ hold by simple inspection of the mixing matrices \eqref{eq:mixing}. For $\tilde{\mathbf{W}} \succ \mathbf{0}$ to hold it is required that
\begin{equation}
\label{eq:not_known}
\frac{2}{\xi \rho}\mathbf{I}_n \succ (1-\eta)\mathbf{L}_{\mathcal{G}}.
\end{equation}
Further, for $\frac{\mathbf{I}_n + \mathbf{W}}{2} \succeq \tilde{\mathbf{W}}$ we require that
\begin{equation}
\frac{1}{2} \geq \eta.
\end{equation}
Finally, note that as long as $\eta > 0$ $\tilde{\mathbf{W}} \succeq \mathbf{W}$ always holds. Note that in a distributed set-up the eigenvalues of $\mathbf{L}_{\mathcal{G}}$ can not be easily calculated without additional communication overhead. However $\mathbf{L}_{\mathcal{G}} \succeq 2 \mathbf{D}_{\mathcal{G}}.$ Therefore, we conservatively request
\begin{align}
\frac{2}{\xi \rho} > 2 (1-\eta) \lambda_{\text{max}}(\mathbf{D}_{\mathcal{G}}) \\
\eta \in \left(0,\frac{1}{2} \right],
\end{align}
to guarantee that the mixing matrices \eqref{eq:mixing} fulfill Assumption \ref{Assumption:pextra}. In particular the case which allows the product $\xi \rho$ to be largest corresponds to selecting $\eta = \frac{1}{2}$ which implies $\frac{2}{\xi \rho} > \lambda_{\text{max}}(\mathbf{D}_{\mathcal{G}})$ is the most forgiving case. Note that this also corresponds to having $\frac{\mathbf{I}_n + \mathbf{W}}{2} = \tilde{\mathbf{W}}.$ Further, we can relate P-EXTRA to the approximated method of multipliers discussed in Section \ref{section:algorithm} by setting $\boldsymbol{\Gamma} = \frac{2}{\xi \rho} \mathbf{I}_{np}.$ Hence, it holds that $\boldsymbol{\Gamma} \succeq \mathbf{E}_{\text{o}}^T\mathbf{E}_{\text{o}}.$ This implies that by using the iterates \eqref{eq:pextra} we are solving \eqref{eq:equivalent_problem} using an approximated version of the method of multipliers. In particular, we are upper bounding the non-distributable term $\|\mathbf{E}_{\text{o}}(\mathbf{x} - \mathbf{x}^k)\|^2 \leq \frac{2}{\xi \rho}\|\mathbf{x} - \mathbf{x}^k\|^2.$

We now proceed to establish the analogous result for generalized ADMM. Recall that we have established that ADMM will converge Q-linearly as long as $\mathbf{P} \succeq 0.$ For ADMM we have established Q-linear convergence for
\begin{align}
\eta \in \left(0,1\right) \\
\mathbf{P} \succeq \mathbf{0}
\end{align}
with $\boldsymbol{\Gamma} = 2\mathbf{D} + \frac{2}{\rho} \mathbf{P}.$ Note that this implies that by using generalized ADMM we are solving \eqref{eq:equivalent_problem} using an approximated version of the method of multipliers. However, we are this time upper bounding the non-distributable term $\|\mathbf{E}_{\text{o}}(\mathbf{x} - \mathbf{x}^k)\|^2 \leq \|\mathbf{x} - \mathbf{x}^k\|_{2\mathbf{D}+\frac{2}{\rho}\mathbf{P}}^2.$ Note that in terms of upper bounding $\|\mathbf{E}_{\text{o}}(\mathbf{x} - \mathbf{x}^k)\|^2$ in P-EXTRA all nodes are treated equal, while in the case of ADMM with $\mathbf{P} = \mathbf{0}$ the difference of degree of each of the nodes is taken into account. This can already be seen in \eqref{eq:pextra} and \eqref{eq:admm_pextra} in the scaling of the gradient. In particular, in the case of ADMM, nodes that have more neighbors will take smaller steps towards their optimizer than nodes with fewer neighbors. On the other hand, P-EXTRA takes the conservative stance of scaling the iterates according to the degree of the node with most neighbors.  
Further, note for generalized ADMM we have established Q-linear convergence for the case $\eta \in (1/2,1)$ as well. Therefore, by using Theorem \ref{theorem:pextra} we conclude that P-EXTRA converges Q-linearly also if
\begin{align}
\label{eq:omega}
\tilde{\mathbf{W}} = \mathbf{I}_n + \omega \frac{\xi \rho}{2}\mathbf{L}_{\mathcal{G}},\, \omega \in \left(0.5,1 \right)
\end{align}
which clearly violates $\frac{\mathbf{I}_n+\mathbf{W}}{2} \succeq \tilde{\mathbf{W}}.$ Experimental evidence that EXTRA performs better by violating the condition was provided in \cite{extra} and referred to as overshooting. Further, generalized ADMM has been shown to converge if $\eta \in (0,\frac{1+\sqrt{5}}{2})$ \cite{variational}. Empirically, generalized ADMM has been observed to provide faster convergence for $\eta = 1.618$ implying an even wider overshooting range for P-EXTRA. 

We believe that analogous results can be obtained between the generalized versions of linearized ADMM \cite{linearize} and PG-ADMM \cite{pgadmm} and EXTRA \cite{extra} and PG-EXTRA \cite{pextra} respectively providing a unifying framework under which to analyze the convergence of these algorithms. However, this is out of the scope of this paper.
In retrospect, the convergence results established in Section \ref{section:convergence} can be established by using the convergence analysis of ADMM with step-size $\rho$ on the problem
\begin{align}
\underset{\mathbf{x} \in \mathbb{R}^{np}, \mathbf{z} \in \mathbb{R}^{mp}}{\text{min}} \quad f(\mathbf{x}) + \frac{\rho(1-\eta)}{2}\|\mathbf{Ax}  + \mathbf{Bz}\|^2 \\
\text{s.t.} \quad \sqrt{\eta}(\mathbf{Ax} + \mathbf{Bz}) = \mathbf{0}.
\end{align}
However, the equivalence we established in Section \ref{section:algorithm} allowed us to relate both ADMM and P-EXTRA to an approximated version of the method of multiplier which allowed us to interpret generalized ADMM and P-EXTRA under a different optic. Further, theorem \ref{theorem:pextra} assumes that the problem \eqref{eq:original} is reformulated as \eqref{eq:distributed} following the methodology of \cite{dadmm}. However, as studied in \cite{ozdalgar} other reformulations are possible. Hence, in order to make the presented results more general we introduce, in the final technical section, more general conditions under which all the analysis done until now holds.

\section{General Formulation \label{section:general}}
In this section we introduce different reformulations to \eqref{eq:def_distributed} that allow us to generalize all the results established until now. This extends the Q-linear convergence of generalized D-ADMM to more constraint matrices \eqref{eq:distributed} and generalizes the convergence under overshooting of P-EXTRA to a larger variety of mixing matrices.

As an alternative to the formulation in \eqref{eq:def_distributed} one can formulate the following equivalent optimization problem
\begin{align}
\label{eq:distributed_equivalent}
\underset{\mathbf{x} \in \mathbb{R}^{np},\mathbf{z}\in \mathbb{R}^{mp}}{\text{min}} \quad f(\mathbf{x}) \qquad \text{s.t.}\,\, \bar{\mathbf{A}}\mathbf{x} + \mathbf{B}\mathbf{z} = \mathbf{0},
\end{align}
where $\bar{\mathbf{A}} \triangleq \frac{1}{2}[(\sqrt{\mathbf{U}} + \sqrt{\mathbf{V}}) \otimes \mathbf{I}_{p} ; (\sqrt{\mathbf{U}}-\sqrt{\mathbf{V}})\otimes \mathbf{I}_p],$ and $\sqrt{\mathbf{V}} \in \mathbb{R}^{m \times n}$ and $\sqrt{\mathbf{U}} \in \mathbb{R}^{m \times n}$ are matrices that fulfill $(\sqrt{\mathbf{V}})^T\sqrt{\mathbf{V}} = \mathbf{V}$ and $(\sqrt{\mathbf{U}})^T\sqrt{\mathbf{U}} = \mathbf{U}.$ Further the matrices $\mathbf{V}$ and $\mathbf{U}$ must fulfill the following conditions:
\begin{assumption}[Mixing matrices]\
\begin{enumerate}[label=(\subscript{C}{{\arabic*}})]
\item \label{condition1} \emph{(Nullspace property)} $\text{nullspace}\{\mathbf{V}\} = \text{span}\{\mathbf{1}_n\},$ where $\mathbf{1}_n$ denotes the vector of all ones of length $n,$
\item \label{condition2} \emph{(Complementarity)} $\mathbf{V} + \mathbf{U} = 2\bar{\mathbf{D}},$ where $\bar{\mathbf{D}}$ is any positive definite diagonal matrix,
\item \label{condition3} \emph{(Distributable)} $\mathbf{U}$ and $\mathbf{V}$ fulfill that $\mathbf{U}_{a,i} = \mathbf{V}_{a,i} = 0$ if $a$ is not an edge that connects to $i.$ 
\end{enumerate} 
\end{assumption}
It is easy to verify that \eqref{eq:distributed_equivalent} is equivalent to \eqref{eq:def_distributed} by verifying that the constraint in \eqref{eq:distributed_equivalent} enforces that the solution $\mathbf{x}^{\star}$ to \eqref{eq:distributed_equivalent} is consensual. Let $\mathbf{x}^{\star}$ and $\mathbf{z}^{\star}$ denote a primal optimal solution of \eqref{eq:distributed_equivalent}. Then, the equality constraint in \eqref{eq:distributed_equivalent} can be written as
\begin{equation}
\frac{1}{2}(\sqrt{\mathbf{U}} + \sqrt{\mathbf{V}}) \otimes \mathbf{I}_p \mathbf{x}^{\star} = \frac{1}{2}(\sqrt{\mathbf{U}} - \sqrt{\mathbf{V}}) \otimes \mathbf{I}_p \mathbf{x}^{\star},
\end{equation}
implying that the equality constraint in \eqref{eq:distributed_equivalent} is fulfilled if and only if
\begin{equation}
\label{eq:iff}
(\mathbf{\sqrt{V}} \otimes \mathbf{I}_p) \mathbf{x}^{\star} = \mathbf{0}.
\end{equation}
As was shown in Lemma \ref{lemma:equivalence} \eqref{eq:iff} holds true if and only if $\mathbf{x}^{\star}$ is consensual. 

Further, if we use generalized D-ADMM to solve the problem in \eqref{eq:distributed_equivalent} following the same procedure as in \cite{dadmm} and \cite{track} we obtain the iterates
\begin{subequations}
\label{eq:general_dadmm}
\begin{align}
\mathbf{x}^{k+1}:= \text{arg }\underset{\mathbf{x} \in \mathbb{R}^{np}}{\text{min}} \quad f(\mathbf{x}) + \left(\boldsymbol{\phi}^k - \frac{\rho}{2}(\mathbf{U}\otimes \mathbf{I}_p)\mathbf{x}^k \right)^T\mathbf{x} \\
\qquad + \frac{\rho}{2}\|\mathbf{x}\|_{\bar{\mathbf{D}}\otimes \mathbf{I}_p}^2 + \frac{1}{2}\|\mathbf{x} - \mathbf{x}^k\|_{\mathbf{P}}^2 \nonumber \\
\boldsymbol{\phi}^{k+1} : = \boldsymbol{\phi}^{k} + \frac{\eta\rho}{2} (\mathbf{V} \otimes \mathbf{I}_p) \mathbf{x}^{k+1},
\end{align}
\end{subequations}
which depend exclusively on the matrices $\mathbf{U}$ and $\mathbf{V}$ and not their square roots.
In particular, for the formulation in \eqref{eq:def_distributed} conditions \ref{condition1}-\ref{condition3} are fulfilled by assigning to the matrices $\mathbf{E}_{\text{o}}$ and $\mathbf{E}_{\text{u}}$ the roles of $\sqrt{\mathbf{V}}$ and $\sqrt{\mathbf{U}}$ respectively. In particular, condition \ref{condition1} can be verified by writing $\mathbf{E}_{\text{o}}^T\mathbf{E}_{\text{o}} = \mathbf{L}_{\mathcal{G}} \otimes \mathbf{I}_p,$ where $\mathbf{L}_{\mathcal{G}}$ is the oriented graph Laplacian matrix.

In order to establish the same results that were established in Section \ref{section:convergence} one has to define a problem equivalent to \eqref{eq:distributed_equivalent} and then apply the method of multipliers and approximate the iterates by an upper bound. The equivalent problem to use in this case is
\begin{equation}
\underset{\mathbf{x} \in \mathbb{R}^{np}}{\text{min}} \,\, f(\mathbf{x}) + \frac{\rho (1-\eta)}{4}\|\mathbf{x}\|^2_{\mathbf{V} \otimes \mathbf{I}_p} \,\,\,\, \text{s.t.} \, \sqrt{\eta} \sqrt{\mathbf{V}} \mathbf{x} = \mathbf{0}.
\end{equation}

Finally, under this new formulation, generalized D-ADMM yields iterates equivalent to those of P-EXTRA by selecting the mixing matrices
\begin{subequations}
\begin{align}
\mathbf{W} = \mathbf{I}_n - \frac{\xi \rho}{2}\mathbf{V} \\
\tilde{\mathbf{W}}_n = \mathbf{I} - \frac{\xi \rho}{2}(1-\eta)\mathbf{V}.
\end{align}
\end{subequations}
 


\section{Conclusion}
In this paper we have shown that generalized distributed ADMM converges Q-linearly to the optimal solution even if only the objective function $\bar{f}$ (c.f. \eqref{eq:original}) is strongly convex, i.e. the function components $f_i$ may not be strongly convex. Further, we have established that under appropriate choice of parameters generalized distributed ADMM and P-EXTRA are equivalent. Consequently we related overshooting with P-EXTRA to over-relaxation with generalized ADMM. While an increase in performance for EXTRA was previously observed in experiments by overshooting, the convergence of neither P-EXTRA nor EXTRA were established if overshooting was performed. We provide convergence guarantees for overshooting with P-EXTRA and conjecture that a similar result can be derived for EXTRA through the study of linearized generalized ADMM.




\end{document}